\theoremstyle{plain}
\newtheorem{thm}{Theorem}[section]
\newtheorem{prop}[thm]{Proposition}
\newtheorem{lemma}[thm]{Lemma}
\newtheorem{cor}[thm]{Corollary}
\theoremstyle{definition}
\newtheorem{defi}[thm]{Definition}
\theoremstyle{remark}
\newtheorem{remark}[thm]{Remark}
\newcommand{\RR}{\ensuremath{\mathbb R}}
\newcommand{\g}{\ensuremath{\mathfrak{g}}}
\newcommand{\li}{\ensuremath{L_{\infty}}}
\newcommand{\cB}{\mathcal{B}}
\newcommand{\cL}{\mathcal{L}}
\newcommand{\cC}{\mathcal{C}}
\definecolor{forest}{rgb}{0,0.5,0} 
\newcommand{\dw}{\ensuremath{d_{tot}}}
\newcommand{\vs}{\varsigma}
\newcommand{\ham}[1]{\Omega^{#1}_{\mathrm{Ham}}\left(M\right)}
\newcommand{\ip}[1]{\iota_{v_{#1}}}
\newcommand{\alphak}[1]{\alpha_{1} \otimes \cdots \otimes \alpha_{#1}}
\newcommand{\alphadk}[1]{\alpha_{1},\hdots,\alpha_{#1}}
\newcommand{\vk}[1]{v_{\alpha_{1}} \wedge \cdots \wedge  v_{\alpha_{#1}}}
\newcommand{\prim}{\varphi}
\begin{document}

\title{A cohomological framework\\ for homotopy moment maps}

\author{Ya\"el Fr\'egier}
\address{Laboratoire de Math\'ematiques de Lens (Universit\'e d'Artois) and Institut f\"ur Mathematik (Universit\"at Z\"urich)}
\email{yael.fregier@math.uzh.ch, yael.fregier@gmail.com}

\author{Camille Laurent-Gengoux}
\address{
Laboratoire et D\'epartement de Math\'ematiques
  UMR 7122
  Universit\'e de Metz et CNRS
  Bat. A, Ile du Saulcy
  F-57045 Metz Cedex 1, France}
\email{camille.laurent-gengoux@univ-lorraine.fr}

\author{Marco Zambon}
\address{Universidad Aut\'onoma de Madrid (Departamento de Matem\'aticas) and ICMAT(CSIC-UAM-UC3M-UCM),
Campus de Cantoblanco,
28049 - Madrid, Spain. 
Current address: KU Leuven, Department of Mathematics, Celestijnenlaan 200B box 2400, BE-3001 Leuven, Belgium}
\email{marco.zambon@uam.es, marco.zambon@icmat.es, marco.zambon@wis.kuleuven.be}


\begin{abstract}
Given a Lie group acting on a manifold $M$ preserving a closed $n+1$-form $\omega$, the notion of homotopy moment map for this action was introduced in \cite{FRZ}, in terms of $L_{\infty}$-algebra morphisms.
In this note we describe homotopy moment maps as  coboundaries of a certain complex. This description simplifies greatly computations, and we use it to study various properties of homotopy moment maps: their relation to equivariant cohomology, their obstruction theory,  how they induce new ones on mapping spaces, and their  equivalences. 
The results we obtain extend some of the results of \cite{FRZ}. 
\end{abstract}

\maketitle

\setcounter{tocdepth}{1} 
\tableofcontents

 \section*{Introduction}
Recall that a symplectic form is a 
closed, non-degenerate 2-form.
It is natural to consider symmetries of a given symplectic manifold, that is, a Lie group acting on a manifold, preserving the symplectic form. Among such actions, 
a nice subclass  is given by actions that admit a moment map; in that case  the infinitesimal generators of the action are hamiltonian vector fields. Actions admitting a moment map enjoy remarkable geometric, algebraic and topological properties, that have been studied extensively in the literature (e.g. symplectic reduction, the relation to equivariant cohomology and localization, convexity theorems,...)

In this note we consider closed $n+1$-forms for some $n\ge 1$. 
When they are  non-degenerate, they are called multisymplectic form,
and are higher analogues of symplectic forms which appear naturally in classical field theory.

Recently 
Rogers \cite{RogersL}
(see also \cite{HDirac}) showed that 
the algebraic structure underlying  a manifold with a closed $n+1$-form   $\omega$ is the one of an $L_{\infty}$-algebra. This allowed  \cite{FRZ} for a natural extension of the notion of moment map to 
closed forms of arbitrary degree, called \emph{homotopy moment map}. The latter is phrased in terms of $L_{\infty}$-algebra morphisms.\\

The first contribution of this note is to construct, out of the action of a Lie group $G$ on a manifold $M$, a chain complex $\cC$  with the following property: 
\begin{itemize}
\item any invariant closed form $\omega$ gives rise to a cocycle $\tilde{\omega}$ in $\cC$
\item homotopy moment maps are given exactly by the primitives of $\tilde{\omega}$.
\end{itemize}
The chain complex $\cC$ is simply the product of the Chevalley-Eilenberg complex of the Lie algebra of $G$, with the de Rham complex of $M$. The action is encoded by the cocycle $\tilde{\omega}$.
Notice that by the above the set of homotopy moment maps (for a fixed  $\omega$) has the structure of an affine space, which is unexpected since $L_{\infty}$-algebra morphisms are generally very non-linear objects.

This characterization of homotopy moment maps is very useful: $L_{\infty}$-algebra morphisms are usually quite intricate and cumbersome to work with in an explicit way, while working with coboundaries in a complex is much simpler. In this note we use the above characterization to:
\begin{itemize}
\item show that certain extensions of $\omega$ in the Cartan model give rise to homotopy moment maps (see \S \ref{eqcohom}),
\item give cohomological obstructions to the existence of homotopy moment maps (see \S \ref{obstr}),
\item show that a homotopy moment map for a $G$-action on $(M,\omega)$ induces one on   $Maps(\Sigma, M)$, the space of maps from 
any closed and oriented manifold $\Sigma$ into $M$, endowed with the closed  form obtained from $\omega$ by transgression (see \S \ref{LM}),
\item obtain a natural notion of equivalence of homotopy moment maps, both under the requirement that $\omega$ be kept fixed and allow $\omega$ to vary (see \S \ref{equiv}). 
We show that it is compatible with the geometric notion of equivalence induced by isotopies of the manifold $M$,  and with the notion of equivalence of $L_{\infty}$-morphisms (see Appendix \ref{sec:dolg}). 
\end{itemize}

In \S \ref{eqcohom} and \S \ref{obstr} we obtain results similar to those of \cite{FRZ}, but with much less computational effort. 
The results obtained in \S \ref{equiv} are a significant extension of  results obtained in \cite{FRZ}, where only closed 3-forms and loop spaces were considered. The equivalences introduced in \ref{equiv} and their properties extend and justify 
the work carried out for closed 3-forms
in \cite[\S 7.4]{FRZ}.

One more application of the  characterization of moment maps as coboundaries in $\cC$ is the following. 
Given  two manifolds endowed with closed forms, 
their cartesian product $(M_1\times M_2,\omega_1\wedge \omega_2)$ is again an object of the same kind. This construction restricts to the multisymplectic category, but not to the symplectic one. 
The above characterization of moment maps is used in 
\cite{ProductMomaps} to construct homotopy moment maps for cartesian products.
\bigskip

\noindent\textbf{Remark:}
Recall that if $\mathfrak{X}$ is a Lie algebra, a \emph{$\mathfrak{X}$-differential algebra} \cite[\S 3]{MeinEqCoho} is a graded commutative algebra $\Omega=\oplus_{i\ge 0}\Omega^i$ with graded derivations $\iota_v, \cL_v$ of degrees $-1,0$ (depending linearly on $v\in \mathfrak{X}$) and a derivation $d$ of degree $1$ such that the Cartan relations hold:
\begin{align*}
[d,d]&=0\;\;\;[\cL_v,d]=0,\;\;\;[\iota_v,d]=\cL_v\\
[\iota_v,\iota_w]&=0,\;\;\;[\cL_v,\cL_w]=\cL_{[v,w]_{\mathfrak{X}}},\;\;[\cL_v,\iota_w]=\iota_{[v,w]_{\mathfrak{X}}}.
\end{align*}
This note is written in terms of geometric objects,
but most of it  applies also to the algebraic setting obtained
   replacing
   the setting we assume in \S \ref{momap} with:
 \begin{center}
\parbox[c]{12.6cm}{\begin{center}
$\mathfrak{X}$ a  Lie algebra, $\Omega$ a  $\mathfrak{X}$-differential algebra, $\omega\in \Omega^{n+1}$ with $d\omega=0$.\\ 
$\g$  a Lie algebra and $\rho\colon \g \to \mathfrak{X}$ a Lie algebra morphism, so that $\cL_{\rho(x)}\omega=0$ for all $x\in \g$.\end{center}
}
 \end{center}

\noindent\textbf{Remark:} 
The existence and uniqueness of homotopy moment maps is also studied by
Ryvkin and Wurzbacher in \cite{WurzRyvkinMomaps}, where the authors obtain independently results similar to ours on this subject, putting an emphasis on the differential geometry of multisymplectic forms.

\noindent\textbf{Acknowledgements:} Y.F. acknowledges support from Marie Curie Grant IOF-hqsmcf-274032 and from the Max Planck Institute for Mathematics. This research was partly supported by the NCCR SwissMAP, funded by the Swiss National Science Foundation (Grant No. 200020\_149150/1).

 M.Z. thanks Camilo Arias Abad, Camilo Rengifo,
Chris Rogers and Bernardo Uribe for useful discussions. M.Z. acknowledges partial financial support by grants  
MTM2011-22612 and ICMAT Severo Ochoa  SEV-2011-0087 (Spain), and
Pesquisador Visitante Especial grant  88881.030367/2013-01 (CAPES/Brazil).

\section{Closed forms}

We recall briefly how some notions from symplectic geometry apply to closed differential forms of arbitrary degree.  

\begin{defi} \label{hamiltonian}
Let $(M,\omega)$ be a {\bf pre-$n$-plectic} manifold, i.e., $M$ is a  manifold and $\omega$ a closed $n+1$-form.  An $(n-1)$-form $\alpha$
is {\bf Hamiltonian} iff there exists a vector field $v_\alpha \in \mathfrak{X}(M)$ such that
\[
d\alpha= -\ip{\alpha} \omega.
\]
We say $v_\alpha$ is a {\bf Hamiltonian vector field} for $\alpha$. 
The set of Hamiltonian $(n-1)$-forms 
is denoted
as $\ham{n-1}$.
\end{defi}

In analogy to symplectic geometry, one can endow the set of Hamiltonian $(n-1)$-forms with a skew-symmetric bracket, which however is not a Lie bracket. If one passes from $\ham{n-1}$ to a larger space, one obtains
an $L_{\infty}$-algebra \cite{LadaStasheff}, which was constructed  essentially in \cite[Thm. 5.2]{RogersL}, and  generalized slightly in \cite[Thm. 6.7]{HDirac}.
\begin{defi}  
Given a  pre-$n$-plectic manifold $(M,\omega)$, the \textbf{observables} form  an $L_{\infty}$ algebra, denoted
$L_{\infty}(M,\omega):=(L,\{l_{k} \})$. The underlying graded vector space  is given by 
\[
L_{i} =
\begin{cases}
\ham{n-1} & i=0,\\
\Omega^{n-1+i}(M) & -n+1 \leq i < 0.
\end{cases}
\]
The maps  $\left \{l_{k} \colon L^{\otimes k} \to L| 1
  \leq k < \infty \right\}$ are defined as
\[ 
l_{1}(\alpha)=d\alpha,
\]
if $\deg({\alpha})>0$, and for all $k>1$ 
\[
l_{k}(\alphadk{k}) =
\begin{cases}
0 & \text{if $\deg({\alphak{k}}) < 0$}, \\
\vs(k) \iota(\vk{k}) \omega  & \text{if
  $\deg({\alphak{k}})=0$}, 
  \end{cases}
\]
 where $v_{\alpha_{i}}$ is any Hamiltonian vector field
associated to $\alpha_{i} \in \ham{n-1}$. Here\footnote{So $\varsigma(k)=1,1,-1,-1,1,\dots$ for $k=1,2,3,4,5,\dots$.} 
 $\varsigma(k)=-(-1)^{k(k+1)/2}$. Notice that $\varsigma(k-1)\varsigma(k)=(-1)^k$ for all $k$.
\end{defi}

Among the Lie group actions on $M$ that preserve $\omega$, it is natural to consider those whose infinitesimal generators are hamiltonian vector fields. This leads to the following notion \cite[Def. 5.1]{FRZ}.

 \begin{defi}\label{def:momap}
A \textbf{(homotopy) moment map} for the action of $G$ on $(M,\omega)$ is a $\li$-morphism $f\colon \g \to L_{\infty}(M,\omega)$ such  
that for all $x\in \g$
\begin{equation}\label{eq:mom}
d(f_1(x))=-\iota(v_x)\omega.
\end{equation}
 
Saying that $f$ is a $\li$-morphism means that it consists of components  $f_k\colon \wedge^k \g\to \Omega^{n-k}(M)$
(for $k=1,\dots,n$)
satisfying

\begin{multline} \label{main_eq_1}
\sum_{1 \leq i < j \leq k}
(-1)^{i+j+1}f_{k-1}([x_{i},x_{j}],x_{1},\ldots,\widehat{x_{i}},\ldots,\widehat{x_{j}},\ldots,x_{k})\\
=df_{k}(x_{1},\ldots,x_{k}) + \vs(k)\iota(v_{x_1}\wedge \cdots \wedge v_{x_k})\omega
\end{multline}
 for $2 \leq k \leq n$, as well as  
\begin{multline} \label{main_eq_2}
\sum_{1 \leq i < j \leq n+1}
(-1)^{i+j+1}f_{n}([x_{i},x_{j}],x_{1},\ldots,\widehat{x_{i}},\ldots,\widehat{x_{j}},\ldots,x_{n+1})
=\vs(n+1)\iota(v_{x_1}\wedge \cdots \wedge v_{x_{n+1}})\omega.
\end{multline}
 \end{defi}

\section{A double complex  encoding moment maps}\label{momap}
The set-up in the whole of this note is the following:

\begin{center}
\fbox{
\parbox[c]{12.6cm}{\begin{center}
$(M,\omega)$ is a   pre-$n$-plectic manifold,\\
$G$ is a Lie group acting on $M$ preserving $\omega$.
\end{center}
}}
 \end{center}
 We denote the Lie algebra of $G$ by $\g$, elements of $\g$ by $x$, and the corresponding
 infinitesimal generators of the action (which are vector fields on $M$) by $v_{x}$.\\

In this section we introduce a complex with the property that suitable coboundaries correspond bijectively to moment maps for the action of $G$ on $(M,\omega)$.

The manifold $M$ and the Lie algebra $\g$ give rise to a  double complex
\begin{equation}\label{eq:dc}
(\wedge^{\ge1} \g^*\otimes \Omega(M), d_\g,d),
\end{equation}
where $d_\g$ is the Chevalley-Eilenberg differential of $\g$ and $d$ is the De Rham differential of $M$.  
We consider the total complex, which we denote by $\cC$,  
with differential $$\dw:=d_\g\otimes 1+1\otimes d.$$ We use the Koszul sign convention, hence,  on an element of $\wedge^k \g^*\otimes \Omega(M)$, $\dw$ acts as $d_\g + (-1)^kd$.
 
We first need a lemma, which appears (using a slightly different notation) as the Extended Cartan Formula in \cite[Lemma 3.4]{MadsenSwannClosed}, and which we present without\footnote{It can be proven   by a direct computation, extending the proof of \cite[Lemma  7.2]{FRZ}.} 
 proof.  
 \begin{lemma}\label{tech_lemma}
Let $M$ be a manifold 
and let  $\Omega$ be an  $N$-form (not necessarily closed).
 For all $k \geq 2$ and all vector fields $v_1,\dots,v_k$  we have:
\begin{align*}  
(-1)^{k}d \iota(v_{1} \wedge\cdots \wedge v_{k}) \Omega =& 
 \sum_{1 \leq i < j \leq
  k} (-1)^{i+j} \iota([v_{i},v_{j}] \wedge v_{1} \wedge \cdots
  \wedge \widehat{v}_{i} \wedge \cdots \wedge \widehat{v}_{j} \wedge \cdots \wedge v_{k})\Omega\\
&+\sum_{1=1}^{k} (-1)^{i} \iota( v_{1} \wedge \cdots
  \wedge \widehat{v}_{i} \wedge \cdots \wedge {v}_{k})\cL_{v_i}\Omega\\
&+ \iota( v_{1} \wedge \cdots
  \wedge   {v}_{k}) d\Omega. 
\end{align*}
\end{lemma}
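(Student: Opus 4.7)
The plan is to prove the formula by induction on $k \geq 2$, using only the classical Cartan calculus identities on a single vector field:
$$[d, \iota_v] = \mathcal{L}_v, \qquad [\mathcal{L}_v, \iota_w] = \iota_{[v,w]}, \qquad [d, \mathcal{L}_v] = 0.$$
None of these identities requires $\Omega$ to be closed, which matches the hypothesis. Throughout I adopt the convention $\iota(v_1 \wedge \cdots \wedge v_k) = \iota_{v_1} \iota_{v_2} \cdots \iota_{v_k}$; the signs in the statement reflect exactly this Koszul convention.

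For the base case $k=2$, compute $d\,\iota_{v_1}\iota_{v_2}\Omega$ by applying Cartan's magic formula twice. First write $d\iota_{v_1} = \mathcal{L}_{v_1} - \iota_{v_1}d$, then commute $\mathcal{L}_{v_1}$ past $\iota_{v_2}$ using $\mathcal{L}_{v_1}\iota_{v_2} = \iota_{v_2}\mathcal{L}_{v_1} + \iota_{[v_1,v_2]}$, and finally apply Cartan's formula again to $d\iota_{v_2}\Omega$. This yields precisely the four terms
$$\iota_{[v_1,v_2]}\Omega, \quad -\iota_{v_1}\mathcal{L}_{v_2}\Omega, \quad \iota_{v_2}\mathcal{L}_{v_1}\Omega, \quad \iota_{v_1}\iota_{v_2}d\Omega,$$
matching the $k=2$ specialization of the claim.

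For the inductive step, factor $\iota(v_1 \wedge \cdots \wedge v_k) = \iota_{v_1}\,\iota(v_2 \wedge \cdots \wedge v_k)$ and apply Cartan's magic formula to the outermost $\iota_{v_1}$:
$$d\,\iota(v_1 \wedge \cdots \wedge v_k)\Omega = \mathcal{L}_{v_1}\,\iota(v_2 \wedge \cdots \wedge v_k)\Omega - \iota_{v_1}\,d\,\iota(v_2 \wedge \cdots \wedge v_k)\Omega.$$
In the first summand, iteratively commute $\mathcal{L}_{v_1}$ inward using $[\mathcal{L}_{v_1},\iota_{v_j}] = \iota_{[v_1,v_j]}$; this produces one term of the form $\iota(v_2 \wedge \cdots \wedge v_k)\mathcal{L}_{v_1}\Omega$ together with $k-1$ terms $\iota\bigl([v_1,v_j]\wedge v_2 \wedge\cdots\wedge \widehat{v}_j\wedge\cdots\wedge v_k\bigr)\Omega$, which account precisely for the $i=1$ contributions in the double sum on the right-hand side. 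To the second summand, apply the inductive hypothesis for $k-1$ vector fields $v_2,\dots,v_k$; this furnishes the remaining $(i,j)$-terms with $2 \le i < j \le k$, the Lie derivative terms $\iota(\cdots)\mathcal{L}_{v_i}\Omega$ for $i \ge 2$, and the final term $\iota(v_1\wedge\cdots\wedge v_k)d\Omega$.

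The main obstacle is sign bookkeeping: the overall $(-1)^k$ on the left, the signs $(-1)^{i+j}$ and $(-1)^i$ on the right, the degree $-1$ of each $\iota_v$ when moved past $d$ or $\mathcal{L}$, and the Koszul signs incurred whenever a vector field is extracted from a wedge product. Gathering these signs carefully—in particular checking that the sign $(-1)^{1+j}$ picked up when $\iota_{v_1}$ is absorbed into $\iota([v_1,v_j]\wedge\cdots)$ agrees with the $(-1)^{i+j}|_{i=1}$ on the right, and that the inductive $(-1)^{k-1}$ on the left combines with $-\iota_{v_1}$ to give $(-1)^k$—is the only nontrivial aspect; all algebraic manipulations are routine applications of the three Cartan identities, as the authors indicate by referring to the proof of \cite[Lemma~7.2]{FRZ}.
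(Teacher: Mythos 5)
The paper itself contains no proof of this lemma: it is stated without proof, with a citation to the Extended Cartan Formula of Madsen--Swann and a footnote saying it follows by ``direct computation, extending the proof of \cite[Lemma 7.2]{FRZ}.'' Your strategy --- induction on $k$ using only the three single-vector Cartan identities --- is precisely that direct computation, so the overall approach is the right one. There is, however, a concrete sign error that prevents the proof from establishing the stated formula.

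The problem is your declared convention $\iota(v_1\wedge\cdots\wedge v_k)=\iota_{v_1}\cdots\iota_{v_k}$. Under that convention your base-case computation is itself correct,
$$d\,\iota_{v_1}\iota_{v_2}\Omega=\iota_{[v_1,v_2]}\Omega+\iota_{v_2}\mathcal{L}_{v_1}\Omega-\iota_{v_1}\mathcal{L}_{v_2}\Omega+\iota_{v_1}\iota_{v_2}\,d\Omega,$$
but the $k=2$ instance of the lemma's right-hand side is $(-1)^{1+2}\iota_{[v_1,v_2]}\Omega+(-1)^{1}\iota_{v_2}\mathcal{L}_{v_1}\Omega+(-1)^{2}\iota_{v_1}\mathcal{L}_{v_2}\Omega+\iota(v_1\wedge v_2)\,d\Omega$: the bracket term and both Lie-derivative terms carry the opposite sign. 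So the base case does \emph{not} match the statement, contrary to what you assert. The signs in the lemma belong to the opposite convention, $\iota(v_1\wedge\cdots\wedge v_k)=\iota_{v_k}\circ\cdots\circ\iota_{v_1}$, i.e.\ $\bigl(\iota(v_1\wedge\cdots\wedge v_k)\Omega\bigr)(\,\cdot\,)=\Omega(v_1,\dots,v_k,\,\cdot\,)$; with that reading the $k=2$ identity checks out. Since the two conventions differ by $(-1)^{k(k-1)/2}$, the discrepancy on the first two sums of the right-hand side is a global factor $(-1)^{k-1}$: invisible for odd $k$ but fatal for even $k$, so in particular an induction anchored at $k=2$ cannot close. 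The repair is mechanical --- adopt the second convention and run the same induction peeling off the outermost contraction $\iota_{v_k}$ (or keep your convention, prove the identity you are actually computing, and insert the translation factor $(-1)^{k(k-1)/2}$ at the end) --- but as written the argument proves a formula that disagrees with the stated one for every even $k$.
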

 
{\begin{remark}
The Lie derivative of a form $\Omega$ along an  multivector field $V=v_{1} \wedge\cdots \wedge v_{k}$ is defined by $\cL_V\Omega:=d\iota(V)\Omega-(-1)^k\iota(V)d\Omega$ 
\cite[Def. A2]{ForgerPoisMulti}.  From the above we deduce that $(-1)^k\cL_V\Omega$ equals 
the first two terms on the right hand side of the identity in Lemma \ref{tech_lemma}.
\end{remark}
}
 {
\begin{lemma}\label{tilde} For any $G$-invariant $\sigma\in \Omega^N(M)$ define \begin{equation}\label{eq:omegak}
{\sigma}_k \colon \wedge^k\g \to \Omega^{N-k}(M),\;\; (x_1,\dots,x_k)\mapsto \iota{(v_{x_1}\wedge\dots\wedge v_{x_k})}\sigma.
\end{equation}
and
$\widetilde{\sigma}:=\sum_{k=1}^{N}(-1)^{k-1}{\sigma}_k$.  
The map $$\widetilde{}\;\; \colon (\Omega(M)^G,d)\to (\wedge^{\ge 1} \g^*\otimes \Omega(M),\dw),\;\; \sigma\mapsto \widetilde{\sigma}$$ intertwines the differentials, that is: $\dw \widetilde{\sigma}=\widetilde{d\sigma}
$.
\end{lemma}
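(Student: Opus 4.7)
The natural approach is a direct component-by-component comparison. Since $\sigma_k \in \wedge^k \g^*\otimes \Omega^{N-k}(M)$, the Koszul sign rule gives $\dw \sigma_k = d_\g \sigma_k + (-1)^k d\sigma_k$, so
\[
\dw \widetilde{\sigma} \;=\; \sum_{k=1}^{N}(-1)^{k-1}d_\g \sigma_k \;-\;\sum_{k=1}^{N} d\sigma_k.
\]
The piece of $\dw\widetilde{\sigma}$ lying in bidegree $(j, N+1-j)$ thus receives contributions only from $\sigma_{j-1}$ (through $d_\g$) and from $\sigma_j$ (through $d$), yielding $(-1)^{j}\, d_\g \sigma_{j-1} - d\sigma_j$; meanwhile the corresponding piece of $\widetilde{d\sigma}$ is $(-1)^{j-1}(d\sigma)_j$. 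So the identity to be proved reduces, for each $j$, to the pointwise statement
\[
(-1)^j (d_\g \sigma_{j-1})(x_1,\dots,x_j) \;-\; d\sigma_j(x_1,\dots,x_j) \;=\;(-1)^{j-1} (d\sigma)_j(x_1,\dots,x_j).
\]

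The key input is Lemma~\ref{tech_lemma} applied with $\Omega=\sigma$ and $v_i = v_{x_i}$. It writes $(-1)^j d\sigma_j(x_1,\dots,x_j)$ as the sum of three terms: a bracket-insertion sum, a sum of terms $\iota(\dots)\cL_{v_{x_i}}\sigma$, and the term $\iota(v_{x_1}\wedge\cdots\wedge v_{x_j})\, d\sigma$, which is by definition $(d\sigma)_j(x_1,\dots,x_j)$. The hypothesis that $\sigma$ is $G$-invariant is used here, and only here, to kill the Lie-derivative terms $\cL_{v_{x_i}}\sigma=0$.

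What remains is to identify the bracket-insertion sum with $\pm (d_\g \sigma_{j-1})(x_1,\dots,x_j)$. Since $v\colon \g\to \mathfrak{X}(M)$ is a Lie algebra (anti-)homomorphism, $[v_{x_i},v_{x_l}]$ equals $\pm v_{[x_i,x_l]}$, so each summand is $\pm\sigma_{j-1}([x_i,x_l],x_1,\dots,\widehat{x}_i,\dots,\widehat{x}_l,\dots,x_j)$; comparing with the Chevalley--Eilenberg formula in the sign convention fixed by the paper (the same one appearing on the left-hand side of \eqref{main_eq_1}) recognises this sum as $\pm d_\g \sigma_{j-1}(x_1,\dots,x_j)$. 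Substituting back into the reduced equation gives the desired equality.

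The main (and really only) obstacle is sign bookkeeping: one must simultaneously keep track of the $(-1)^{k-1}$ in the definition of $\widetilde{\sigma}$, the Koszul sign $(-1)^k$ in $\dw$, the $(-1)^k$ on the left of Lemma~\ref{tech_lemma}, the $(-1)^{i+j}$ inside its bracket-term, and the sign convention used for $d_\g$ and for the infinitesimal action. Once these are aligned consistently, the computation is mechanical; the geometric content lies entirely in Lemma~\ref{tech_lemma} together with the invariance $\cL_{v_x}\sigma=0$.
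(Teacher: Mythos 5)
Your proposal is correct and follows essentially the same route as the paper: both rest on the single identity $(-1)^{k}d\sigma_{k}=d_\g\sigma_{k-1}+(d\sigma)_k$, obtained from Lemma~\ref{tech_lemma} with the Lie-derivative terms killed by $G$-invariance, and then a bookkeeping of bidegrees (your component-by-component matching is just the paper's reindexing of sums read off degree by degree). The only point to make explicit is the bidegree $j=1$, where Lemma~\ref{tech_lemma} (stated for $k\ge 2$) does not apply and there is no $d_\g\sigma_0$ term: there the required identity $-d\sigma_1=(d\sigma)_1$ follows directly from $d\iota_{v_x}\sigma+\iota_{v_x}d\sigma=\cL_{v_x}\sigma=0$, which is again the invariance hypothesis.
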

\begin{proof}  Lemma \ref{tech_lemma} implies that
  $(-1)^{k}d\sigma_{k}=d_\g\sigma_{k-1}+(d\sigma)_k$ for all $k\ge 2$. 
Hence
\begin{align*}
\dw \widetilde{\sigma}&=
 \sum_{k=1}^{N}(-1)^{k-1}(d_\g\sigma_k +(-1)^k d\sigma_k)\\
&= \sum_{k=2}^{N+1}(-1)^{k}d_\g\sigma_{k-1} -\sum_{k=1}^{N}  d \sigma_k
\\
&= \sum_{k=2}^{N+1}((-1)^{k}d_\g\sigma_{k-1} - d \sigma_k)-d\sigma_1
=\widetilde{d\sigma},
\end{align*} 
where in the third equality we used $\sigma_{N+1}=0$ , and 
in the last one we used the above equation and $-d\sigma_1=(d\sigma)_1$ (the latter follows from $d(\iota_{v_x}\sigma)+
\iota_{v_x}d\sigma=\cL_{v_x}\sigma=0$). \end{proof}

Since $\omega$ is a closed differential form, from Lemma \ref{tilde} we obtain:

\begin{cor}\label{omegacl}  
 $\widetilde{\omega}$ is $\dw$-closed.
\end{cor}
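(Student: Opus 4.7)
The plan is essentially immediate: this is a one-line consequence of Lemma \ref{tilde} applied to $\sigma = \omega$. I would first verify that the hypotheses of the lemma are met, and then invoke it.

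First I would note that $\omega$ satisfies both hypotheses of Lemma \ref{tilde}: by the standing assumption of \S \ref{momap}, the group $G$ acts preserving $\omega$, so $\cL_{v_x}\omega = 0$ for every $x \in \g$, meaning $\omega \in \Omega^{n+1}(M)^G$. Moreover, by Definition \ref{hamiltonian}, $\omega$ is closed, so $d\omega = 0$.

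Then I would apply Lemma \ref{tilde} with $\sigma = \omega$ to obtain
\begin{equation*}
\dw \widetilde{\omega} = \widetilde{d\omega} = \widetilde{0} = 0,
\end{equation*}
which is exactly the claim. There is no real obstacle here; the work has already been absorbed into Lemma \ref{tech_lemma} and Lemma \ref{tilde}, and the corollary is just the specialization to a closed invariant form. The only thing to double-check is that the construction $\sigma \mapsto \widetilde{\sigma}$ indeed produces an element of $\wedge^{\ge 1}\g^* \otimes \Omega(M)$ of the correct total degree, but this is clear from formula \eqref{eq:omegak}: the summand ${\omega}_k$ lies in $\wedge^k \g^* \otimes \Omega^{n+1-k}(M)$, which has total degree $n+1$, so $\widetilde{\omega}$ is a homogeneous element of $\cC$ of degree $n+1$ and the equation $\dw\widetilde{\omega}=0$ makes sense.
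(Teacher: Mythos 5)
Your proof is correct and follows exactly the paper's route: the paper states the corollary as an immediate consequence of Lemma \ref{tilde} applied to the closed, $G$-invariant form $\omega$, giving $\dw\widetilde{\omega}=\widetilde{d\omega}=0$. Your verification of the hypotheses and the degree bookkeeping are fine but not strictly needed beyond what the paper already assumes.
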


The next proposition  states that moment maps  for the action of $G$ on $(M,\omega)$ correspond bijectively to primitives of $\widetilde{\omega}$ in $(\cC,\dw)$. In particular, moment maps form an affine space, which is somewhat surprising since generally $L_{\infty}$-morphisms are very non-linear objects.

\begin{prop}\label{lem:doublemomap}
Let $\prim=\prim_1+\dots+\prim_n$, with $\prim_k \in \wedge^k \g^*\otimes \Omega^{n-k}(M)$. Then:     $\dw\prim=\widetilde{\omega}$ if{f} $$f_k:=\varsigma(k)\prim_k \colon \wedge^k\g \to \Omega^{n-k}(M),$$ for $k=1,\dots,n$, are the components of a homotopy moment map for the action of $G$ on $(M,\omega)$.
\end{prop}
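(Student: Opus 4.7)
\smallskip
\noindent\textbf{Proof plan.} The strategy is to decompose the equation $\dw\prim = \widetilde{\omega}$ according to bi-degree in the double complex \eqref{eq:dc} and then check that the resulting system of identities matches the moment-map conditions \eqref{eq:mom}, \eqref{main_eq_1} and \eqref{main_eq_2} under the rescaling $f_k = \vs(k)\prim_k$.

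Since $\prim_k \in \wedge^k\g^* \otimes \Omega^{n-k}(M)$, the Koszul sign convention gives $\dw\prim_k = d_\g\prim_k + (-1)^k d\prim_k$, so the component of $\dw\prim$ in bi-degree $(k, n+1-k)$ equals $d_\g\prim_{k-1} + (-1)^k d\prim_k$ (with the convention $\prim_0 = \prim_{n+1} = 0$). By Lemma \ref{tilde}, the corresponding component of $\widetilde{\omega}$ is $(-1)^{k-1}\omega_k$. Hence $\dw\prim = \widetilde{\omega}$ is equivalent to the family of identities
\begin{equation}\label{eq:bideg}
d_\g\prim_{k-1} + (-1)^k d\prim_k = (-1)^{k-1}\omega_k, \qquad 1 \le k \le n+1.
\end{equation}

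For the other direction, I would rewrite the moment-map equations using the standard Chevalley--Eilenberg formula $(d_\g\xi)(x_1,\ldots,x_k) = \sum_{i<j}(-1)^{i+j}\xi([x_i,x_j], x_1, \ldots, \widehat{x}_i, \ldots, \widehat{x}_j, \ldots, x_k)$, under which the left-hand sides of \eqref{main_eq_1} and \eqref{main_eq_2} become $-d_\g f_{k-1}$ and $-d_\g f_n$ respectively. The full moment-map condition then reads: $df_1 = -\omega_1$ from \eqref{eq:mom}, together with $d_\g f_{k-1} + df_k = -\vs(k)\omega_k$ for $2 \le k \le n$, and $d_\g f_n = -\vs(n+1)\omega_{n+1}$. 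Substituting $f_j = \vs(j)\prim_j$ and applying the identity $\vs(k-1)\vs(k) = (-1)^k$ recorded in the excerpt converts each of these three relations into the corresponding case $k=1$, $2 \le k \le n$, or $k = n+1$ of \eqref{eq:bideg}; the two boundary cases are slightly easier since one of the two terms on the left of \eqref{eq:bideg} drops out.

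The main difficulty is purely the bookkeeping among four interacting sign conventions---the Koszul sign $(-1)^k$ in $\dw$, the alternating sign $(-1)^{k-1}$ in the definition of $\widetilde{\omega}$, the sign $(-1)^{i+j}$ in the Chevalley--Eilenberg differential, and the coefficients $\vs(k)$ in the components $f_k$. The rescaling $f_k = \vs(k)\prim_k$ is tailored precisely so that the identity $\vs(k-1)\vs(k) = (-1)^k$ reconciles them, which is exactly what makes the affine-space picture of moment maps work.
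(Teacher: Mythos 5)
Your proposal is correct and follows essentially the same route as the paper's proof: decomposing $\dw\prim=\widetilde{\omega}$ into its bi-degree components, identifying the Chevalley--Eilenberg terms in \eqref{main_eq_1}--\eqref{main_eq_2}, and reconciling the signs via $\vs(k-1)\vs(k)=(-1)^k$. The only cosmetic difference is that you package the three cases into a single family of identities with the convention $\prim_0=\prim_{n+1}=0$, whereas the paper writes them out separately.
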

\begin{proof}
$\dw \prim=
\sum_{k=2}^{n+1} d_\g\prim_{k-1}+\sum_{k=1}^{n}(-1)^{k} d\prim_{k}$ is equal to $\widetilde{\omega}$ if{f}  we have
\begin{align}
\label{eq:third} -d\prim_1&=\omega_1\\
\label{eq:first} d_\g\prim_{k-1}+ (-1)^{k} d\prim_{k}&= (-1)^{k-1}\omega_k\;\;\;\;\;\text{for all $2\le k \le n$}\\
 \label{eq:second}d_\g \prim_n&= (-1)^{n}\omega_{n+1}.
\end{align}

Evaluating  eq. \eqref{eq:third} on $x\in \g$ we obtain $d\prim_1(x)=-\iota_{v_x}\omega$, which is equivalent to eq. \eqref{eq:mom}.

Evaluating eq. \eqref{eq:first} on $x_1,\dots,x_k\in \g$ we obtain
\begin{align*}
\sum_{1 \leq i < j \leq k}
(-1)^{i+j}\prim_{k-1}&([v_{x_i},v_{x_j}],v_{x_1},\ldots,\widehat{v_{x_i}},\ldots,\widehat{v_{x_j}},\ldots,v_{x_k})\\
&=-(-1)^k d\prim_{k}(v_{x_1},\ldots,v_{x_k})+ (-1)^{k-1}\iota(v_{x_1}\wedge \cdots \wedge v_{x_k})\omega.
\end{align*}
Multiplying this equation by $-\vs(k-1)=-(-1)^k\vs(k)$ we obtain eq. \eqref{main_eq_1}. Similarly one sees that eq. \eqref{eq:second}
is equivalent to eq. \eqref{main_eq_2}.
\end{proof} 

\begin{remark}
The results of this section can be derived also from \cite[\S 3]{FiorenzaRogersUrsPrequantum}. See \cite
{FRZ} for an explanation of how this derivation goes.
\end{remark}

\section{Closed forms and moment maps as cocycles}\label{sec:bar}

Recall that whenever
 $f: (A,d)\longrightarrow (A',d')$ is a map of complexes,  $(A[1]\oplus A', d_f)$ is a complex with differential
$d_f:=\left(\begin{smallmatrix}
 d & 0\\
 f & -d'
\end{smallmatrix}\right)$. This is known as the \emph{cone construction}.
 
We apply this to the map of complexes $\;\widetilde{}\;$ of Lemma \ref{tilde}.
We obtain:
\begin{prop}\label{prop:cone}
Fix an action of a Lie group $G$ on a manifold $M$. Then
$$\cB:=\Omega(M)^G[1]\oplus (\wedge^{\ge1} \g^*\otimes \Omega(M)),\;\;\;\;\; D:=
 \begin{pmatrix}
 d & 0\\
 \widetilde{} & -\dw
\end{pmatrix} $$ is a complex with the property: the $D$-closed elements in degree $n$ are pairs
$(\omega[1],\prim)$ where $\omega$ is a pre-$n$-plectic form
and $\prim$ corresponds (via Prop. \ref{lem:doublemomap}) to a moment map for $\omega$.
\end{prop}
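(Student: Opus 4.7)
The plan is to assemble Proposition \ref{prop:cone} directly from the ingredients already developed in the section, with essentially no new computation.

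First I would verify that $(\cB,D)$ is a complex, i.e.\ that $D^2=0$. Since the statement invokes the cone construction recalled at the beginning of \S \ref{sec:bar}, it suffices to check its three hypotheses for the data $A=\Omega(M)^G$, $A'=\wedge^{\ge 1}\g^*\otimes\Omega(M)$, $d'=\dw$, $f=\;\widetilde{}\;$: namely $d^2=0$, $\dw^2=0$ (because $\dw$ is the total differential of the double complex \eqref{eq:dc}), and the fact that $\;\widetilde{}\;$ is a chain map, which is precisely the content of Lemma \ref{tilde}. The cone construction then delivers $D^2=0$ automatically; explicitly, the only mixed component of $D^2$ is $\widetilde{d\omega}-\dw\widetilde{\omega}$, which vanishes by Lemma \ref{tilde}.

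Next I would unpack what $D$-closedness in degree $n$ means. A degree $n$ element of $\Omega(M)^G[1]$ is a $G$-invariant $(n+1)$-form, which I write as $\omega[1]$; a degree $n$ element of $\wedge^{\ge 1}\g^*\otimes\Omega(M)$ has exactly the shape $\prim=\prim_1+\cdots+\prim_n$ with $\prim_k\in\wedge^k\g^*\otimes\Omega^{n-k}(M)$, matching the input of Proposition \ref{lem:doublemomap}. Applying $D$ gives
\[
D(\omega[1],\prim)=\bigl(d\omega[1],\;\widetilde{\omega}-\dw\prim\bigr).
\]
Vanishing of the first component says $d\omega=0$, so $(M,\omega)$ is pre-$n$-plectic; note that $\omega\in\Omega(M)^G$ already guarantees the $G$-action preserves $\omega$, as required by the standing assumption of \S \ref{momap}. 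Vanishing of the second component is the equation $\dw\prim=\widetilde{\omega}$, which by Proposition \ref{lem:doublemomap} is equivalent to the maps $f_k:=\varsigma(k)\prim_k$ assembling into a homotopy moment map for the $G$-action on $(M,\omega)$.

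I do not anticipate any real obstacle: the whole argument is a packaging of previously established facts. The only point requiring care is the bookkeeping with the shift $[1]$ and with the total grading, in particular verifying that $\widetilde{\omega}$ lives in total degree $n+1$ of the double complex, which is immediate since each summand $\omega_k\in\wedge^k\g^*\otimes\Omega^{n+1-k}(M)$ has total degree $n+1$, and that the signs in $D$ match those entering Proposition \ref{lem:doublemomap}. With that consistency in hand, the two bulleted assertions about $D$-cocycles in degree $n$ are just a translation of the cone identity $\dw\prim=\widetilde{\omega}$.
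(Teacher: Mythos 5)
Your proposal is correct and follows the same route as the paper: the paper's proof consists precisely of computing $D(\omega[1],\prim)=(d\omega[1],\widetilde{\omega}-\dw\prim)$ and invoking Prop.~\ref{lem:doublemomap}, with the complex property supplied by the cone construction applied to the chain map of Lemma~\ref{tilde}. Your additional degree bookkeeping is a harmless elaboration of the same argument.
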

\begin{proof}
Compute $D(\omega[1],\prim)=(d\omega[1],\widetilde{\omega}-\dw\prim)$ and apply
Prop. \ref{lem:doublemomap}.
\end{proof}

\section{Equivariant cohomology}\label{eqcohom}
 
In this section we recover in a quick way a result of \cite{FRZ}, which states that suitable extensions of $\omega$ in the Cartan model give rise to moment maps (see Prop. \ref{prop:concept}).  
  
 The following is a variation of Lemma \ref{tilde}: 
\begin{lemma}\label{tilde1} For any $G$-equivariant $F\colon \g \to \Omega^{N}(M)$, such that $\iota_{v_x}F(x)=0$ for all $x\in \g$,
 define $$ {F}_k\colon \wedge^k\g \to \Omega^{N+1-k}(M),\;\; {F}_k(x_1,\dots,x_k)=
 \iota(v_{x_1}\wedge\dots\wedge v_{x_{k-1}})F(x_k)$$ 
and
$\widetilde{F}:= {F}_1+\dots+ {F}_{N+1}\in \wedge^{\ge1} \g^*\otimes \Omega(M)$. 
Then $\dw\widetilde{F}=-\widetilde{d  F}$.  
\end{lemma}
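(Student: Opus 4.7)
The plan is to imitate the proof of Lemma \ref{tilde}, but to apply the extended Cartan formula (Lemma \ref{tech_lemma}) to the $N$-form $F(x_{k+1})$ contracted with the vector fields $v_{x_1},\dots,v_{x_k}$, rather than to a single invariant form. Before anything else, I would check that $F_k$ is indeed alternating, so that $\widetilde{F}\in\wedge^{\ge 1}\g^*\otimes\Omega(M)$ really makes sense: antisymmetry in the first $k-1$ slots is automatic from the multivector $v_{x_1}\wedge\cdots\wedge v_{x_{k-1}}$, and for the last two slots I would polarize the standing hypothesis by substituting $x\mapsto x+y$ in $\iota_{v_x}F(x)=0$ to obtain $\iota_{v_x}F(y)+\iota_{v_y}F(x)=0$.

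The heart of the argument is the single identity
\[
d_\g F_k \;=\; (-1)^k dF_{k+1}-(dF)_{k+1},\qquad k=1,\dots,N+1,
\]
with the convention $F_{N+2}:=0$. For $k\ge 2$ this follows by applying Lemma \ref{tech_lemma} with $\Omega=F(x_{k+1})$ and vector fields $v_{x_1},\dots,v_{x_k}$. Using $[v_{x_i},v_{x_j}]=v_{[x_i,x_j]}$, the bracket sum reproduces exactly those Chevalley--Eilenberg terms of $d_\g F_k(x_1,\dots,x_{k+1})$ whose index pair $(i,j)$ satisfies $j\le k$; using the equivariance $\cL_{v_{x_i}}F(x_{k+1})=F([x_i,x_{k+1}])$ together with the antisymmetry of $F_k$ to cyclically shift $[x_i,x_{k+1}]$ from the last slot to the first (contributing a sign $(-1)^{k-1}$), the Lie-derivative sum reproduces precisely the remaining Chevalley--Eilenberg terms, those with $j=k+1$; and the residual term $\iota(v_{x_1}\wedge\cdots\wedge v_{x_k})dF(x_{k+1})$ is $(dF)_{k+1}$ on the nose. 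For $k=1$ the identity reduces to the classical Cartan formula $d\iota_v=\cL_v-\iota_v d$ evaluated on $F(x_2)$, combined with equivariance; for $k=N+1$ the left-hand side of Lemma \ref{tech_lemma} vanishes automatically because we are contracting too many vector fields into an $N$-form, and the identity still drops out.

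Given this identity the conclusion is a brief telescope:
\[
\dw\widetilde{F}=\sum_{k=1}^{N+1}\bigl(d_\g F_k+(-1)^k dF_k\bigr)=\sum_{k=1}^{N+1}(-1)^k\bigl(dF_k+dF_{k+1}\bigr)-\sum_{k=1}^{N+1}(dF)_{k+1}.
\]
The first sum collapses to $-dF_1=-(dF)_1$ because the boundary contribution at $k=N+1$ vanishes thanks to $F_{N+2}=0$, and the second sum is $-\sum_{k=2}^{N+2}(dF)_k$, so together they assemble into $-\widetilde{dF}$. The step I expect to be the main obstacle is the bookkeeping inside the central identity: the Lie-derivative terms coming out of Lemma \ref{tech_lemma} naturally present the bracket $[x_i,x_{k+1}]$ as the argument of $F$ (hence, in the language of $F_k$, in the \emph{last} slot), whereas Chevalley--Eilenberg places it in the \emph{first} slot, and it is only the antisymmetry of $F_k$ established via the polarization identity that reconciles the two presentations with the correct accumulated sign.
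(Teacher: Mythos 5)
Your proposal is correct and follows essentially the same route as the paper's proof: the key identity $d_\g F_k=(-1)^k dF_{k+1}-(dF)_{k+1}$ is exactly the paper's equation $(-1)^{k-1}dF_k=d_\g F_{k-1}+(dF)_k$ after reindexing, obtained in the same way by applying Lemma \ref{tech_lemma} to $\Omega=F(x_{k+1})$ and using equivariance to trade $\cL_{v_{x_i}}F(x_{k+1})$ for $F([x_i,x_{k+1}])$, with the same attention to the sign from moving the bracket between the first and last slots of the (totally skew, by polarization of $\iota_{v_x}F(x)=0$) map $F_k$. The concluding telescoping sum is likewise the paper's computation, so there is nothing to add.
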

\begin{remark}\label{rem:tilde1}
1) Notice that $F_1=F$.

2) If $\alpha\in \Omega^{N+1}(M)$ is $G$-invariant, then $\alpha_1
\colon \g \to \Omega(M)^{N}, x\mapsto \iota_{v_x}\alpha$ is $G$-equivariant since
$\cL_{v_{x}} (\iota_{v_y}\alpha)=\iota_{[v_x,v_y]}\alpha$.
We have $\widetilde{\alpha}=\widetilde{(\alpha_1)}$ (where the l.h.s. was defined in Lemma \ref{tilde} and the r.h.s. in Lemma \ref{tilde1}.)

3) $\widetilde{F}$ lies in the $G$-invariant part of $\wedge^{\ge1} \g^*\otimes \Omega(M)$, see \cite[\S 6]{FRZ} for a proof.
 \end{remark}
\begin{proof}
Notice first that the condition  $\iota_{v_x}F(x)=0$ 
ensures that 
$\widetilde{F}$ is well-defined (as a totally skew-symmetric map).
It also implies that $\iota_{v_x}d(F(x))=\cL_{v_x}(F(x))=F([x,x])=0$, so that $\widetilde{dF}$ is well-defined.

We compute for all $k$:
\begin{align}\label{eq:k11}
(d_{\g} {F}_{k-1})&(x_1,\dots,x_k)=\sum_{1\le i<j\le k}(-1)^{i+j}
 {F}_{k-1}([x_i,x_j],\dots,\widehat{x_i},\dots,\widehat{x_j},\dots,x_k)\\
=& 
\sum_{1\le i<j\le k-1}(-1)^{i+j}
\iota({[v_{x_i},v_{x_j}]\wedge\dots\wedge\widehat{v_{x_i}}\wedge\dots\wedge\widehat{v_{x_j}}\wedge\dots\wedge v_{x_{k-1}}})F(x_k) \nonumber\\
+&
\sum_{i=1}^{k-1}(-1)^{i+k}(-1)^{k-2} 
\iota({v_{x_1}\wedge\dots\wedge\widehat{v_{x_i}}\wedge \dots\wedge v_{{x_{k-1}}}})F([x_i,x_k])\nonumber  
\end{align}
and notice that $F([x_i,x_k])=\cL_{v_{x_i}}F(x_k)$ by the equivariance of $F$. Hence
\begin{align*}
(-1)^{k-1} d( {F}_k(x_1,\dots,x_k))&=(-1)^{k-1}d(\iota(v_{x_1}\wedge \dots\wedge v_{x_{k-1}})F(x_k))\\
&= (d_{\g}F_{k-1})(x_1,\dots,x_k)+\iota(v_{x_1}\wedge \dots\wedge v_{x_{k-1}})d(F(x_k)),
\end{align*}
where in the last equation we used Lemma \ref{tech_lemma} (applied to $\Omega:=F(x_k)$) and
eq. \eqref{eq:k11}.
  In other words:
 \begin{equation}\label{eq:inotherwords}
(-1)^{k-1}d F_k= d_{\g}F_{k-1}+ ({d  F})_k.
\end{equation} 
 
We conclude the proof computing
 \begin{align*}
\dw \widetilde{F}=
 \sum_{k=1}^{N+1}(d_\g{F}_k +(-1)^k d{F}_k)&=
 \sum_{k=2}^{N+2}d_\g{F}_{k-1} +\sum_{k=1}^{N+1}(-1)^k d {F}_k\\
&= \sum_{k=2}^{N+2}(d_\g{F}_{k-1} +(-1)^k d {F}_k)-d{F}_1 \\
&=
-\widetilde{d  {F}},
\end{align*}
using ${F}_{N+2}=0$ in the third equality and eq.
 \eqref{eq:inotherwords} in the last one.
 \end{proof}

Given the action of $G$ on $M$,
recall that the Cartan model is the complex\footnote{It calculates the equivariant cohomology when $G$ is compact.}
$(\Omega(M)\otimes S\g^*)^G$, where elements of $\g^*$ are assigned degree two, together with the Cartan differential $d_G$ (see for example \cite{GSSusy}). If we choose a basis $x_i$ of $\g$ and denote by $\xi^i$ the dual basis of $\g^*$ (concentrated in degree two), we can write $d_G=d\otimes 1-\sum_i\iota_{v_{x_i}}\otimes \xi^i$.

\begin{remark}\label{rem:CartanModel}
The invariant pre-$n$-plectic form $\omega$ (or, more precisely, $\omega\otimes 1$) is usually not closed in the Cartan model.
Given an equivariant linear map $\mu \colon \g\to \Omega^{n-1}(M)$, which we can regard as an element of $(\Omega^{n-1}(M)\otimes \g^*)^G$,
 we have \cite[\S 6.1]{FRZ}: $\omega-\mu$ is a closed element of the Cartan model if{f}  
 for all $x,y \in \g$
\begin{itemize}
\item[a)] 
$d\mu(x) = -\iota_{v_{x}} \omega$ \;\;(i.e., $v_x$ is the hamiltonian vector field of $\mu(x)$),
\item[b)] $\cL_{v_{x}} \mu(y ) = \mu([x,y])$\;\;  (i.e., $\mu \colon \g
\to \Omega^{n-1}_{ham}(M)$ is $G$-equivariant),
\item[c)]  $\iota_{v_{x}} \mu(x) =0$.
\end{itemize}
\end{remark}

We recover the main statement\footnote{In \cite[Thm. 6.3]{FRZ} it is
also shown that the moment map $f$ is equivariant,
using  b) in Remark \ref{rem:CartanModel}.}
 of \cite[Thm. 6.3]{FRZ}: 

\begin{prop}\label{prop:concept}
Let $\mu \colon \g\to \Omega^{n-1}(M)$ be an equivariant linear map so that $\omega-\mu$ is a cocycle in the Cartan model.
Then $\dw\widetilde{\mu}=\widetilde{\omega}$, and the maps ($1 \leq k \leq n$)
\begin{align*}
f_{k} \colon \wedge^k \g  &\to \Omega^{n-k}(M),\\
(x_{1},\ldots x_{k})&
\mapsto \vs(k)\iota(v_{x_{1}} \wedge\cdots \wedge
v_{x_{k-1}}) \mu(x_{k})
\end{align*}
are the components of a homotopy moment map $\g \to L_{\infty}(M,\omega)$.
\end{prop}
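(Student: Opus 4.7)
The plan is to deduce the result from Proposition \ref{lem:doublemomap}, which identifies homotopy moment maps with primitives of $\widetilde{\omega}$ in $(\cC,\dw)$ via the correspondence $f_k = \varsigma(k)\varphi_k$. Applying Lemma \ref{tilde1} to $F := \mu$ with $N = n-1$, one obtains $\widetilde{\mu} = \widetilde{\mu}_1 + \dots + \widetilde{\mu}_n$ with components $\widetilde{\mu}_k \in \wedge^k \g^* \otimes \Omega^{n-k}(M)$ given by the explicit formula $\widetilde{\mu}_k(x_1,\dots,x_k) = \iota(v_{x_1} \wedge \cdots \wedge v_{x_{k-1}})\mu(x_k)$. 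These match the maps $f_k$ of the proposition up to the sign $\varsigma(k)$, so the whole statement reduces to verifying the single cochain identity $\dw \widetilde{\mu} = \widetilde{\omega}$.

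To establish this identity I would first unpack the Cartan cocycle hypothesis via Remark \ref{rem:CartanModel}, which converts it into the three conditions (a) $d\mu(x) = -\iota_{v_x}\omega$, (b) $\mu$ is $G$-equivariant, and (c) $\iota_{v_x}\mu(x) = 0$. Conditions (b) and (c) are precisely the hypotheses needed to apply Lemma \ref{tilde1} to $\mu$, yielding at once $\dw \widetilde{\mu} = -\widetilde{d\mu}$.

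It then remains to identify $-\widetilde{d\mu}$ with $\widetilde{\omega}$. Using condition (a), the $G$-equivariant map $d\mu \colon \g \to \Omega^n(M)$ coincides with $-\omega_1$ in the notation of Lemma \ref{tilde}, where $\omega_1(x) = \iota_{v_x}\omega$. By Remark \ref{rem:tilde1}(2), the tilde of $\omega_1$ (in the sense of Lemma \ref{tilde1}) agrees with $\widetilde{\omega}$ (in the sense of Lemma \ref{tilde}), so $-\widetilde{d\mu} = \widetilde{\omega}$. Since every step is a direct invocation of a previously established result, no serious obstacle is expected; the proposition is a clean assembly of Lemma \ref{tilde1}, Remark \ref{rem:tilde1}(2), Remark \ref{rem:CartanModel}, and Proposition \ref{lem:doublemomap}, illustrating the conceptual advantage over the direct computational approach used in \cite{FRZ}.
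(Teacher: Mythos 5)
Your proposal is correct and follows essentially the same route as the paper's own proof: both reduce the statement to the identity $\dw\widetilde{\mu}=\widetilde{\omega}$ via Proposition \ref{lem:doublemomap}, obtain $\dw\widetilde{\mu}=-\widetilde{d\mu}$ from Lemma \ref{tilde1} (justified by conditions b) and c) of Remark \ref{rem:CartanModel}), and then use condition a) together with Remark \ref{rem:tilde1}(2) to identify $-\widetilde{d\mu}$ with $\widetilde{\omega}$. No gaps.
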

\begin{proof}
 By a) in Remark \ref{rem:CartanModel} we have the following equality of equivariant maps $\g\to \Omega^{n}(M)$:
$$d\mu=-\omega_1,$$
where $\omega_1(x)=\iota_{v_x}\omega$.
As $\iota_{v_x}\iota_{v_x}\omega=0$ for all $x$, we can apply the map $\;\widetilde{}\;$ (see Lemma \ref{tilde1}) to obtain
 $\widetilde{d\mu}=-\widetilde{\omega_1}$.
 We have $\dw\widetilde{\mu}=-\widetilde{d  \mu}$  
by  applying Lemma \ref{tilde1} to $\mu$
 (we are allowed to do so   because
of c) in Remark \ref{rem:CartanModel}), and we also have
 $\widetilde{\omega_1}=\widetilde{\omega}$ (see Rem. \ref{rem:tilde1}). Altogether 
 we obtain $$\dw\widetilde{\mu}=\widetilde{\omega}.$$   We conclude applying Prop. \ref{lem:doublemomap} to $\prim:=\widetilde{\mu}$.
\end{proof}

\begin{remark}
Prop. \ref{prop:concept} can be  extended \cite{AriasUribe}   \cite{FRZ}, as follows: every arbitrary extension of $\omega$ to a cocycle in the Cartan model gives rise to a moment map.
\end{remark} 
\section{Obstruction theory}\label{obstr}
 
We consider the obstruction theory for the existence of moment maps, obtaining results similar to those contained in 
\cite[\S 9.1, \S 9.2]{FRZ}.

Fix a point $p\in M$. 
It is immediate to check that 
$$r\colon (\wedge^{\ge1} \g^*\otimes \Omega(M),\dw)\to (\wedge \g^*,d_\g),\;\;\eta\otimes \alpha \mapsto \eta\cdot\alpha|_p$$
is a chain map. Here $\Omega|_p\in \RR$ is declared to vanish if $\Omega\in \Omega^{\ge 1}(M)$. 
Since $\widetilde{\omega}$ is $d_{tot}$-closed by Cor. \ref{omegacl},
it follows that $r(\widetilde{\omega})=(-1)^{n}{\omega}_{n+1}|_p\in \wedge^{n+1}\g^*$ is $d_\g$-closed, hence it defines a class in the Chevalley-Eilenberg cohomology $H_{CE}(\g)$.

\begin{cor}  Let $p\in M$. 
If a homotopy moment map exists, then $[{\omega}_{n+1}|_p]=0$.
\end{cor}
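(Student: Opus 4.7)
My plan is to give a one-line argument, exploiting the fact that $r$ is a chain map combined with Prop.~\ref{lem:doublemomap}.

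First I would recall that by Prop.~\ref{lem:doublemomap}, assuming the existence of a homotopy moment map for the action of $G$ on $(M,\omega)$ is the same as assuming the existence of a primitive $\varphi=\varphi_1+\cdots+\varphi_n\in \wedge^{\ge 1}\g^*\otimes\Omega(M)$, with $\varphi_k\in \wedge^k\g^*\otimes \Omega^{n-k}(M)$, such that $\dw\varphi=\widetilde{\omega}$. Applying the chain map $r$ to both sides gives
\[
r(\widetilde{\omega})=r(\dw\varphi)=d_\g\, r(\varphi),
\]
so $r(\widetilde{\omega})$ is a $d_\g$-coboundary in $\wedge\g^*$.

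Next I would unpack both sides. The identity $r(\widetilde{\omega})=(-1)^n \omega_{n+1}|_p$ was already noted right above the statement: only the top component of $\widetilde{\omega}$ contributes, since the others live in $\wedge^k\g^*\otimes \Omega^{n+1-k}(M)$ with $n+1-k\ge 1$ and thus vanish under evaluation at $p$. For the same reason only $\varphi_n\in \wedge^n\g^*\otimes\Omega^0(M)$ survives in $r(\varphi)$, yielding the explicit identity $(-1)^n\omega_{n+1}|_p=d_\g(\varphi_n|_p)$. Hence $[\omega_{n+1}|_p]=0$ in $H_{CE}(\g)$.

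There is essentially no obstacle here: all the conceptual content has been packaged into Prop.~\ref{lem:doublemomap} and into the fact that $r$ is a chain map. The only thing to verify is the degree bookkeeping that picks out $\omega_{n+1}$ and $\varphi_n$ as the only summands contributing at $p$, which is immediate from the bidegrees.
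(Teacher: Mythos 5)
Your proof is correct and follows the same route as the paper: invoke Prop.~\ref{lem:doublemomap} to get exactness of $\widetilde{\omega}$, then push through the chain map $r$. The paper merely phrases this at the level of cohomology classes ($0=H(r)([\widetilde{\omega}])=(-1)^{n}[\omega_{n+1}|_p]$), whereas you unwind it at the cochain level to exhibit the explicit primitive $\varphi_n|_p$; the content is identical.
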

\begin{proof}
By Prop. \ref{lem:doublemomap},  a homotopy moment map exists if{f} $[\widetilde{\omega}]=0$. In this case $0=H(r)([\widetilde{\omega}])=(-1)^{n}[{\omega}_{n+1}|_p]$, where $H(r)$ denotes the map on cohomology induced by $r$.
\end{proof}

\begin{cor} If $[{\omega}_{n+1}|_p]=0$ and\footnote{These cohomology classes vanish, for example, if
$H^1(M)=\dots=H^n(M)=0$.} $$H^j_{CE}(\g)\otimes H^{n+1-j}(M)=0 \text{ for }j=1,\dots,n$$ 
then there exists a moment map.
\end{cor}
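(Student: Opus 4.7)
The plan is to apply Proposition \ref{lem:doublemomap}: existence of a moment map is equivalent to $[\widetilde{\omega}]=0$ in $H^{n+1}(\cC,\dw)$. To compute the latter, observe that $(\cC,\dw)$ is by construction the tensor product of complexes $(\wedge^{\ge 1}\g^*, d_\g)\otimes (\Omega(M), d)$ with the standard Koszul-sign differential. Since the augmentation ideal $\wedge^{\ge 1}\g^*$ has cohomology $H^p_{CE}(\g)$ in every positive degree $p$ (and vanishes in degree $0$), the Künneth formula over $\RR$ yields
\begin{equation*}
H^{n+1}(\cC) \;\cong\; \bigoplus_{p=1}^{n+1} H^p_{CE}(\g)\otimes H^{n+1-p}(M).
\end{equation*}

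The second hypothesis kills every summand with $1\le p\le n$, so $H^{n+1}(\cC)\cong H^{n+1}_{CE}(\g)\otimes H^0(M)$. For connected $M$, the chain map $r$ introduced at the start of this section identifies this surviving summand with $H^{n+1}_{CE}(\g)$, and sends $[\widetilde{\omega}]\mapsto (-1)^n[\omega_{n+1}|_p]$. This vanishes by the first hypothesis, hence $[\widetilde{\omega}]=0$, and Proposition \ref{lem:doublemomap} produces a moment map.

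The step requiring most care is verifying that, after the Künneth reduction, the composition of $r$ with the surviving Künneth projection really is an isomorphism (not merely a map into $H^{n+1}_{CE}(\g)$). Equivalently, any residual $\dw$-cocycle in $\wedge^{n+1}\g^*\otimes C^\infty(M)$ must satisfy $df=0$, forcing $f$ to be locally constant; evaluation at $p$ then recovers its class in $H^{n+1}_{CE}(\g)$. The disconnected case would require the hypothesis $[\omega_{n+1}|_q]=0$ to hold at a point $q$ in each connected component of $M$.
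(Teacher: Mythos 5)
Your proof is correct and follows essentially the same route as the paper: both invoke the algebraic Künneth formula for $\cC=(\wedge^{\ge1}\g^*)\otimes\Omega(M)$ to reduce $H^{n+1}(\cC)$ to $H^{n+1}_{CE}(\g)\otimes H^0(M)$ and then use the chain map $r$ together with Prop.~\ref{lem:doublemomap}. Your closing caveat about connectedness is a valid refinement the paper leaves implicit: for disconnected $M$ the map $H(r)$ fails to be injective and one must require $[\omega_{n+1}|_q]=0$ at a point of each component.
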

\begin{proof} The algebraic K\"unneth formula (\cite[Exerc. 14.23]{BT})
and the conditions on the vanishing of cohomology groups imply that 
$H(r)\colon H^{n+1}(\wedge^{\ge1} \g^*\otimes \Omega(M),\dw)\to H^{n+1}(\wedge \g^*,d_\g)$ is an isomorphism. Hence 
$[\widetilde{\omega}]$  vanishes if{f} $[{\omega}_{n+1}|_p]$   vanishes. 
The latter does vanish by assumption, so there exists $\prim$ with
$\dw\prim=\widetilde{\omega}$, and by Prop. \ref{lem:doublemomap} the primitive $\prim$ gives rise to a homotopy moment map.
\end{proof}

\section{Actions on mapping spaces}\label{LM}

In \cite[\S 11]{FRZ} it is shown that a moment map for a pre-$2$-plectic manifold $M$ gives rise to a moment map for the loop space $LM$ and an induced presymplectic form. Recall that  $LM=M^{S^1}$ consists of all differentiable maps from the circle $S^1$ to $M$. In this section we
  generalize this, allowing $M$ to be any pre-$n$-plectic manifold and 
 replacing the circle with any compact, orientable manifold.

\subsection{Loop spaces}\label{subsec:loop}

For the sake of exposition, consider first the case of the loop space $LM$ (an infinite-dimensional Fr\'echet manifold). The action of $G$ on $M$ induces an action on $LM$, simply given by $(g\cdot \gamma)(t):=g\cdot \gamma(t)$ for all $\gamma\in LM$ and $t\in S^1$. Given an element $x$ of the Lie algebra $\g$, recall that we denote by $v_x$ (a vector field on $M$) the corresponding infinitesimal generator  of the action on $M$. The  
corresponding infinitesimal generator of the action on $LM$, which we denote by $v_x^{\ell}$, is given as follows: $v_x^{\ell}|_{\gamma}=\gamma^*v_x\in \Gamma(\gamma^*TM)=T_{\gamma}LM$, for all $\gamma\in LM$.

There is a degree preserving\footnote{The notation 
$\Omega(LM)[-1]$ refers to the fact that here $\Omega^{k-1}(LM)$ is assigned degree $k$.}
 map 
\[
\ell \colon \Omega(M) \to \Omega(LM)[-1]
\]
called transgression, which commutes with the de Rham differential
\cite[\S 3.5]{brylinski}. Explicitly, it sends a form
 $\alpha\in \Omega^j(M)$ to  
$\alpha^{\ell}\in \Omega^{j-1}(LM)$ given by  
\[
\alpha^{\ell} \vert_{\gamma}(z_1,\ldots,z_{j-1}) = \int^{2\pi}_{0} 
\alpha(z_{1},\ldots,z_{j-1},\dot{\gamma}) \vert_{\gamma(s)} ~ ds
\quad \forall \gamma \in LM, ~ \forall z_{1},\ldots,z_{j-1} \in T_{\gamma}LM.
\] 
In particular, the closed  form $\omega\in \Omega^{n+1}(M)$   transgresses to a closed form $\omega^{\ell}\in \Omega^n(LM)$.

Consider the complex $\cC=(\wedge^{\ge 1} \g^*\otimes \Omega(M), \dw)$ of eq. \eqref{eq:dc}, as well as $\cC':=(\wedge^{\ge 1} \g^*\otimes \Omega(LM)[-1], \dw)$. The transgression map  extends trivially to 
a degree preserving map
  $$Id\otimes \ell\colon \cC\to \cC',$$
which commutes with $d_{\g}$  and the de Rham differential, and hence with $\dw$.
We use   the superscript $\ell$ to denote this map too. 
In particular, given
${\prim}=\prim_1+\dots+\prim_n\in \cC$ where $\prim_k \in \wedge^k \g^*\otimes \Omega^{n-k}(M)$, we obtain an element $\prim^{\ell}\in \cC'$ with components $\prim^{\ell}=(\prim^{\ell})_1+\dots+(\prim^{\ell})_{n-1}$ where $(\prim^{\ell})_k:=(\prim_k)^{\ell} \in \wedge^k \g^*\otimes \Omega^{n-k-1}(LM)$.

\begin{prop}\label{prop:transg}
If $\prim$ corresponds (in the sense of Prop. \ref{lem:doublemomap}) to a homotopy moment map for $(M,\omega)$, then $\prim^{\ell}$ corresponds to
 a homotopy moment map for $(LM,\omega^{\ell})$.
\end{prop}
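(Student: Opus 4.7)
The plan is to reduce the statement to the identity $d_{tot}\varphi^\ell = \widetilde{\omega^\ell}$ in $\mathcal{C}'$, using Prop.~\ref{lem:doublemomap} as the bridge between moment maps and coboundaries. By that proposition, the hypothesis on $\varphi$ is equivalent to $d_{tot}\varphi = \widetilde{\omega}$ in $\mathcal{C}$, and the conclusion on $\varphi^\ell$ is equivalent to $d_{tot}\varphi^\ell = \widetilde{\omega^\ell}$ in $\mathcal{C}'$. Thus everything reduces to producing the latter identity from the former.

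First, I would apply the extended transgression map $Id\otimes \ell\colon \mathcal{C}\to \mathcal{C}'$ to the hypothesis $d_{tot}\varphi=\widetilde{\omega}$. Since, as recalled in the excerpt, $\ell$ commutes with $d$, and since $Id\otimes\ell$ trivially commutes with $d_{\g}$, it commutes with $d_{tot}=d_\g + (-1)^{\bullet}d$. Consequently,
\[
d_{tot}\varphi^\ell = d_{tot}(Id\otimes\ell)(\varphi) = (Id\otimes\ell)(d_{tot}\varphi) = (Id\otimes\ell)(\widetilde{\omega}) = (\widetilde{\omega})^{\ell}.
\]
So it remains to prove the compatibility
\[
(\widetilde{\omega})^{\ell} = \widetilde{\omega^{\ell}}.
\]

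The key step, and the main (if modest) obstacle, is this last identity. Recalling the definitions from Lemma~\ref{tilde}, it boils down to checking that for each $k$,
\[
\bigl(\iota(v_{x_1}\wedge\cdots\wedge v_{x_k})\omega\bigr)^{\ell} \;=\; \iota(v^{\ell}_{x_1}\wedge\cdots\wedge v^{\ell}_{x_k})\,\omega^{\ell},
\]
i.e.\ that transgression intertwines contraction by the infinitesimal generators on $M$ with contraction by the infinitesimal generators $v^{\ell}_x$ on $LM$. This follows from a direct unwinding of the integral formula for $\ell$: evaluating the right-hand side at $\gamma\in LM$ on tangent vectors $z_{k+1},\ldots,z_{n}\in T_{\gamma}LM$ gives
\[
\int_{0}^{2\pi}\omega\bigl(v_{x_1},\ldots,v_{x_k},z_{k+1},\ldots,z_n,\dot\gamma\bigr)\big|_{\gamma(s)}\, ds,
\]
since $v^{\ell}_{x_i}|_{\gamma} = \gamma^{*}v_{x_i}$, and this is precisely the integrand defining the left-hand side.

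Combining these two steps yields $d_{tot}\varphi^\ell = \widetilde{\omega^{\ell}}$. Applying Prop.~\ref{lem:doublemomap} in the reverse direction (to $(LM,\omega^{\ell})$, which is pre-$(n-1)$-plectic because $\omega^{\ell}$ is closed) then shows that $\varphi^{\ell}$ corresponds to a homotopy moment map for the induced $G$-action on $(LM,\omega^{\ell})$.
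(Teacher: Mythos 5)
Your proposal is correct and follows essentially the same route as the paper's proof: apply $Id\otimes\ell$ to $d_{tot}\prim=\widetilde{\omega}$, use that transgression commutes with $d_{tot}$, verify the key compatibility $(\widetilde{\omega})^{\ell}=\widetilde{\omega^{\ell}}$ componentwise via the integral formula, and invoke Prop.~\ref{lem:doublemomap} in both directions. No gaps.
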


\begin{proof}
If $\dw \prim=\widetilde{\omega}$ then
 \begin{equation}\label{ldtot}
\dw (\prim^{\ell})=(\dw \prim)^{\ell}=(\widetilde{\omega})^{\ell}=\widetilde{{\omega}^{\ell}}.
\end{equation}
The last equality holds because for all $k$ we have  $({\omega}_k)^{\ell}=({\omega}^{\ell})_k$, as a consequence of
\begin{align*}
({\omega}_k)^{\ell} (x_1,\dots,x_k) \vert_{\gamma}=(\iota{(v_{x_1}\wedge\dots\wedge v_{x_k})}\omega)^{\ell} \vert_{\gamma}=&
\int^{2\pi}_{0} 
\omega(v_{x_1},\ldots,v_{x_k},\bullet,\dot{\gamma}) \vert_{\gamma(s)} ~ ds\\
=& \iota{(v_1^{\ell}\wedge\dots\wedge v_k^{\ell})}({\omega}^{\ell})\vert_{\gamma}
=
({\omega}^{\ell})_k (x_1,\dots,x_k) \vert_{\gamma},
\end{align*}
where $x_1,\dots,x_k \in \g$, $\gamma\in LM$, and $\bullet$ denotes  $n-k$ slots for elements of  $T_{\gamma}LM$. Recall that $({\omega}^{\ell})_k$ was defined in Lemma \ref{tilde}.
   
Thanks to eq. \eqref{ldtot} we can now apply
Prop. \ref{lem:doublemomap} (which holds in the setting\ of Fr\'echet manifolds too). 
\end{proof}

\subsection{General mapping spaces}

We now generalize Prop. \ref{prop:transg}. Let $\Sigma$ be a compact, oriented manifold of dimension $s$.
The $G$ action on $M$ gives rise to a $G$ action on
$M^{\Sigma}$, 
 the Fr\'echet manifold of smooth maps from $\Sigma$ to $M$.
(The formulae for the this action and the corresponding infinitesimal generators are exactly as in \S \ref{subsec:loop}).

Transgression is the differential-preserving map
\[
\ell:=\int_{\Sigma}\circ\; ev^* \colon \Omega(M) \to \Omega(M^{\Sigma})[-s]
\] 
where 
$ev\colon \Sigma \times M^{\Sigma}\to M$
is the evaluation map and 
$\int_{\Sigma}$ denotes  integration along the fibers \cite[Cap. VI.4]{audin2004torus} of the projection $\Sigma \times M^{\Sigma}\to   M^{\Sigma}$, which lowers the degree of a differential form by $s$.
Notice that  the closed  form $\omega\in \Omega^{n+1}(M)$   transgresses to a closed  form $\omega^{\ell}\in \Omega^{n+1-s}(M^{\Sigma})$.

The transgression map  extends trivially to a map of complexes  $$Id\otimes \ell\colon \cC\to \cC',$$ where now
 $\cC':=(\wedge^{\ge 1} \g^*\otimes \Omega(M^{\Sigma})[-s], \dw)$.
Let ${\prim}=\prim_1+\dots+\prim_n$, where $\prim_k \in \wedge^k \g^*\otimes \Omega^{n-k}(M)$.
\begin{prop}\label{prop:corr}
If $\prim$ corresponds (in the sense of Prop. \ref{lem:doublemomap}) to a homotopy moment map for $(M,\omega)$, then $\prim^{\ell}$ corresponds to
 a homotopy moment map for $(M^{\Sigma},\omega^{\ell})$.
\end{prop}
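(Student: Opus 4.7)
The plan is to follow the strategy of the proof of Proposition \ref{prop:transg} verbatim, with the explicit integration-over-$S^1$ formula replaced by the conceptual description $\ell = \int_\Sigma \circ\, ev^*$. The goal is to establish the identity $\dw \prim^\ell = \widetilde{\omega^\ell}$, from which Prop. \ref{lem:doublemomap} (valid also in the Fr\'echet setting) immediately yields the moment map for $(M^\Sigma,\omega^\ell)$. This identity will follow from two facts, each the natural generalization of what was used in the loop case: (a) the extended transgression $Id\otimes \ell\colon \cC\to \cC'$ is a chain map with respect to $\dw$; and (b) transgression intertwines the tilde construction, i.e.\ $(\widetilde{\omega})^\ell = \widetilde{\omega^\ell}$.

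For (a), $ev^*$ commutes with $d$ as a pullback, and fiber integration $\int_\Sigma$ commutes with $d$ because $\Sigma$ is closed (the boundary contribution in the projection formula vanishes). Since $\ell$ acts only on the $\Omega(M)$-factor while $d_\g$ acts only on the $\wedge^{\ge 1}\g^*$-factor, they trivially commute, so $Id\otimes \ell$ intertwines $\dw$. For (b), by the definition of the tilde construction in Lemma \ref{tilde} it suffices to prove, for every $k$,
\[
\bigl(\iota(v_{x_1}\wedge \cdots \wedge v_{x_k})\omega\bigr)^\ell \;=\; \iota(v_{x_1}^\Sigma \wedge \cdots \wedge v_{x_k}^\Sigma)\,\omega^\ell,
\]
where $v_x^\Sigma$ denotes the infinitesimal generator of the induced $G$-action on $M^\Sigma$, defined exactly as in \S\ref{subsec:loop}.

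To establish this last identity, lift each $v_{x_i}^\Sigma$ trivially to the vector field $(0,v_{x_i}^\Sigma)$ on $\Sigma\times M^\Sigma$. A direct computation with the differential of $ev$ (at $(\sigma,\gamma)$ it sends $(0,z)$ to $z(\sigma)$) shows that $(0,v_{x_i}^\Sigma)$ is $ev$-related to $v_{x_i}$, whence
\[
\iota\bigl((0,v_{x_1}^\Sigma)\wedge \cdots \wedge (0,v_{x_k}^\Sigma)\bigr)\,ev^*\omega \;=\; ev^*\,\iota(v_{x_1}\wedge \cdots \wedge v_{x_k})\omega.
\]
Because the lifted fields are horizontal with respect to the projection $\Sigma\times M^\Sigma \to M^\Sigma$, contraction by them commutes with $\int_\Sigma$; applying $\int_\Sigma$ to both sides produces the desired identity. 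The main obstacle here is checking this last commutation of contraction with fiber integration for a horizontal multivector field, together with sign bookkeeping coming from the degree shift $[-s]$ — both are standard but deserve care. Once (a) and (b) are in hand, the computation $\dw \prim^\ell = (\dw \prim)^\ell = (\widetilde{\omega})^\ell = \widetilde{\omega^\ell}$ is immediate, and Prop. \ref{lem:doublemomap} applied to $\prim^\ell$ completes the proof.
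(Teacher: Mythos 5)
Your proposal is correct and takes essentially the same approach as the paper: the paper likewise reduces everything to the chain-map property of $Id\otimes\ell$ and the identity $(\omega_k)^{\ell}=(\omega^{\ell})_k$, and it explicitly mentions your direct computation via the derivative of $ev$ (which sends $(0,(v_x)^{\ell})|_{(p,\sigma)}$ to $(v_x)|_{\sigma(p)}$) as a valid way to establish that identity, alongside iterating the relation $(\iota_{v_x}\omega)^{\ell}=\iota_{(v_x)^{\ell}}\omega^{\ell}$.
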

\begin{proof}
The proof is the same as for Prop. \ref{prop:transg}. We just point out that the equation $({\omega}_k)^{\ell}=({\omega}^{\ell})_k$ for all $k$ is obtained applying the well-known relation $(\iota_{v_x}\omega)^{\ell}=\iota_{(v_x)^{\ell}}{\omega}^{\ell}$ for all $x\in \g$.  
It
can be proven also directly, exactly as in the proof of Prop.\ref{prop:transg},  using the explicit description of the integration along the fiber given in \cite[Cap. VI.4]{audin2004torus} and the fact that the derivative $d_{(p,\sigma)}ev\colon T_{(p,\sigma)}(\Sigma \times M^{\Sigma})\to T_{\sigma(p)}M$
maps
a tangent vector of the form $(Z,0)$ to the vector $\sigma_*(Z)$ and, for all $x\in \g$, the vector  $(0,(v_x)^{\ell})|_{(p,\sigma)}$ to $(v_x)|_{\sigma(p)}$.
\end{proof}

Spelling out Prop. \ref{prop:corr} in terms of moment maps  we obtain:
\begin{cor}
Let  $$f\colon \g \to L_{\infty}(M,\omega)$$ 
be a homotopy moment map for the pre-$n$-plectic manifold $(M,\omega)$ with components 
$f_k\colon \wedge^k\g\to \Omega^{n-k}(M)$, where $k=1,\dots,n$.

Then $$f^{\ell}\colon \g \to L_{\infty}(M^{\Sigma},\omega^{\ell})$$ 
is a homotopy moment map for the pre-$(n-s)$-plectic manifold
$(M^{\Sigma},\omega^{\ell})$ with components 
$(f^{\ell})_k:=(f_k)^{\ell}\colon \wedge^k\g\to \Omega^{n-s-k}(M^{\Sigma})$, where $k=1,\dots,n-s$.
\end{cor}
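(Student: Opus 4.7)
The corollary is a direct translation of Proposition \ref{prop:corr} from the language of primitives in $\cC$ to the language of $L_\infty$-morphisms. My plan is to go through the correspondence of Proposition \ref{lem:doublemomap} twice: once to pass from $f$ to its associated primitive $\prim$, and once again (now on $M^\Sigma$) to pass from $\prim^\ell$ back to an $L_\infty$-morphism. The only content of the argument is bookkeeping of degrees and signs.

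First, I would set $\prim := \prim_1 + \dots + \prim_n$ with $\prim_k := \vs(k) f_k \in \wedge^k \g^* \otimes \Omega^{n-k}(M)$. By Proposition \ref{lem:doublemomap} (using $\vs(k)^2 = 1$), $f$ being a homotopy moment map for $(M,\omega)$ is equivalent to $\dw \prim = \widetilde{\omega}$. Then by Proposition \ref{prop:corr}, $\prim^\ell$ is a primitive of $\widetilde{\omega^\ell}$ in $\cC' = (\wedge^{\ge 1}\g^* \otimes \Omega(M^\Sigma)[-s], \dw)$, with components $(\prim^\ell)_k = (\prim_k)^\ell = \vs(k)(f_k)^\ell \in \wedge^k \g^* \otimes \Omega^{n-k-s}(M^\Sigma)$.

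Next I would apply Proposition \ref{lem:doublemomap} in the opposite direction, now for the pre-$(n-s)$-plectic manifold $(M^\Sigma, \omega^\ell)$: the primitive $\prim^\ell$ yields a homotopy moment map whose $k$-th component is $\vs(k)(\prim^\ell)_k = \vs(k)^2 (f_k)^\ell = (f_k)^\ell$. This is exactly the claim, and the range of indices $k = 1,\dots, n-s$ is forced by degree, since transgression lowers form-degree by $s$ and $(f_k)^\ell$ trivially vanishes whenever $n-k-s < 0$.

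The only point needing a small verification is that the range of $k$ works out, i.e., that the recasting of $\prim^\ell$ with components indexed $k=1,\dots,n$ really produces a homotopy moment map indexed $k=1,\dots,n-s$. But this is immediate from the degree count $(f_k)^\ell \in \Omega^{n-s-k}(M^\Sigma)$ together with the fact that the moment map equations \eqref{main_eq_1}--\eqref{main_eq_2} for $(M^\Sigma, \omega^\ell)$ stop at $k = n-s$. I do not expect any real obstacle: the work has already been carried out in Proposition \ref{prop:corr}, and this corollary is essentially a dictionary entry converting $\prim \leftrightarrow f$ in both directions.
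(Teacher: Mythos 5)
Your proposal is correct and matches the paper's (implicit) argument: the corollary is stated there precisely as "spelling out Prop.~\ref{prop:corr} in terms of moment maps," i.e.\ applying the dictionary of Prop.~\ref{lem:doublemomap} on both sides exactly as you do, with the signs cancelling via $\vs(k)^2=1$ and the index range $k=1,\dots,n-s$ forced by the degree drop under transgression.
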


\section{Equivalences}\label{equiv}
  
In this section we introduce notions of equivalence  for: 1)  certain
cocycles in the Cartan model, and 2) pairs consisting of closed invariant forms  and moment maps. 
 Recall that
Prop. \ref{prop:concept} states that to the former Cartan cocycles one can canonically associate moment maps; we show that the above equivalences are compatible with this association. 
All along this section we fix
an action of a  Lie group $G$  on a manifold $M$.

 \subsection{Equivalences of Cartan cocycles}

\begin{defi}\label{def:ecequiv} Two  cocycles $C^0=\omega^0-\mu^0$ and $C^1=\omega^1-\mu^1$ in the Cartan model, with $\omega^0,\omega^1\in\Omega^{n+1}(M)^G$ and 
$\mu^0,\mu^1\in (\Omega^{n-1}(M)\otimes \g^*)^G$, are \textbf{equivalent} if{f} they differ by a coboundary of the form\footnote{If $C^0-C^1$ is exact, in general we may not find a primitive of the form $\alpha+F$ as above. We justify our definition remarking that the choices of Cartan cocycles we allow are also not the most general ones.
}
 $d_G
(\alpha+F)$, where
$$\text{   
$\alpha \in \Omega^{n}(M)^G$ and $F\in (\Omega^{n-2}(M)\otimes \g^*)^G$}.$$

Explicitly, $C^1-C^0=d_G(\alpha+F)$ means that
\begin{itemize}
\item[a)] $\omega^1-\omega^0=d\alpha$
\item[b)] $\mu^1-\mu^0=\iota_{\bullet}\alpha-dF$
\item[c)] $\iota(v_x)F(x)=0$
for all $x\in \g$,
\end{itemize}
where we use the short form $(\iota_{\bullet}\alpha)(x):=\iota_{v_x}\alpha$.  
\end{defi}

\begin{remark}
In the symplectic case (so $n=1$), Def. \ref{def:ecequiv} reduces to: 
$\omega^1-\omega^0=d\alpha$ and $\mu^1-\mu^0=\iota_{\bullet}\alpha$ for some
$\alpha \in \Omega^{1}(M)^G$. In particular, if $\omega^1=\omega^0$, each function $\iota_{v_x}\alpha$ is constant. 
  \end{remark}
  
The following proposition states that if two Cartan cocycles are related by a $G$-equivariant diffeomorphism of $M$ isotopic to the identity, then they are equivalent.

\begin{prop}\label{prop:geoequivcoho}
Let the Lie group $G$ act on $M$. Let $\omega^0,\omega^1\in\Omega^{n+1}(M)^G$. Let $\mu^0,\mu^1\in (\Omega^{n-1}(M)\otimes \g^*)^G$
so that $C^i:=\omega^i-\mu^i$ is a cocycle in the Cartan model.
 Suppose that there exists a $G$-equivariant diffeomorphism $\psi$, isotopic to $Id_M$ by $G$-equivariant diffeomorphisms, with  
\begin{equation*}
\psi^*\omega^1=\omega^0,\;\;\;\psi^*\mu^1=\mu^0.
\end{equation*}
(Here $\mu^1$ is viewed as a map $\g\to \Omega^{n-1}(M)$ and
$(\psi^*\mu^1)(x):=(\psi^*(\mu^1(x))$ for all $x\in \g$).
Then $C^1$ and $C^0$ are equivalent in the sense of Def. \ref{def:ecequiv}.
\end{prop}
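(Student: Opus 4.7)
The plan is to run a Moser-type argument. I would choose a smooth path $\{\psi_t\}_{t\in [0,1]}$ of $G$-equivariant diffeomorphisms joining $\psi_0=Id_M$ to $\psi_1=\psi$, and work with its time-dependent generating vector field $X_t$, characterized by $\tfrac{d}{dt}\psi_t=X_t\circ \psi_t$. Differentiating $\Phi_g\circ \psi_t=\psi_t\circ \Phi_g$ in $t$ shows that $X_t$ is $G$-invariant for every $t$. From this one gets routinely that $\psi_t^*$ preserves the spaces of $G$-invariant forms and of $G$-equivariant maps $\g\to \Omega(M)$, and that it intertwines contractions by infinitesimal generators: $\psi_t^*\iota_{v_x}=\iota_{v_x}\psi_t^*$ for all $x\in\g$. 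These remarks will be used repeatedly.

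Since $d\omega^1=0$, Cartan's formula gives $\tfrac{d}{dt}\psi_t^*\omega^1 = d\psi_t^*\iota_{X_t}\omega^1$, so integrating from $0$ to $1$ and using $\psi_0^*\omega^1=\omega^1$, $\psi_1^*\omega^1=\omega^0$ yields condition a) of Def.\ \ref{def:ecequiv} for
$$\alpha := -\int_0^1 \psi_t^*\iota_{X_t}\omega^1\, dt\;\in\;\Omega^n(M)^G.$$
For the $\mu$-component I would similarly expand
$$\tfrac{d}{dt}\psi_t^*\mu^1(x)=\psi_t^*d\iota_{X_t}\mu^1(x)+\psi_t^*\iota_{X_t}d\mu^1(x),$$
and rewrite the second term using a) of Rem.\ \ref{rem:CartanModel}, namely $d\mu^1(x)=-\iota_{v_x}\omega^1$, together with $\iota_{X_t}\iota_{v_x}=-\iota_{v_x}\iota_{X_t}$ and the intertwining $\psi_t^*\iota_{v_x}=\iota_{v_x}\psi_t^*$. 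Setting
$$F(x) := \int_0^1 \psi_t^*\iota_{X_t}\mu^1(x)\, dt,$$
integration then produces $\mu^1(x)-\mu^0(x)=\iota_{v_x}\alpha-dF(x)$, which is condition b). Condition c), $\iota_{v_x}F(x)=0$, drops out of $\iota_{v_x}\mu^1(x)=0$ (Rem.\ \ref{rem:CartanModel} c)) combined with the intertwining property. The required invariance/equivariance of $\alpha$ and $F$ follows from the first paragraph applied to $\omega^1$ and $\mu^1$ respectively (for $F$ one checks at the infinitesimal level that $[v_y,X_t]=0$ and $\cL_{v_y}\mu^1(x)=\mu^1([y,x])$ combine to give $\cL_{v_y}F(x)=F([y,x])$).

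The only delicate step is what glues everything together: the term $\psi_t^*\iota_{X_t}d\mu^1(x)$ coming from Cartan's formula has to be converted, via the cocycle relation $d\mu^1=-\iota_{v_\bullet}\omega^1$ in the Cartan model, into $\iota_{v_x}$ applied to the integrand defining $\alpha$. This is precisely the step that forces the two primitives $\alpha$ and $F$ to be produced simultaneously by the \emph{same} isotopy and to be packaged together as $\alpha+F$, rather than as independent primitives of $\omega^1-\omega^0$ and of $\mu^1-\mu^0$; it is also the only place where the cocycle condition for $\omega^1-\mu^1$ enters the argument in an essential way.
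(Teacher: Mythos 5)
Your proof is correct and follows essentially the same Moser-type argument as the paper: differentiate along the isotopy, apply Cartan's magic formula, and use the cocycle conditions of Remark \ref{rem:CartanModel} to convert $\iota_{X_t}d\mu^1(x)$ into $\iota_{v_x}$ of the integrand defining $\alpha$. The only (immaterial) difference is that you keep $\omega^1,\mu^1$ fixed and integrate their pullbacks $\psi_t^*\iota_{X_t}\omega^1$, $\psi_t^*\iota_{X_t}\mu^1$, whereas the paper transports the data along the isotopy (defining $\omega^s,\mu^s$ by $\psi_s^*\omega^s=\omega^0$, $\psi_s^*\mu^s=\mu^0$) and integrates $\iota_{X_s}\omega^s$, $\iota_{X_s}\mu^s$; both yield valid (a priori different) primitives.
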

\begin{proof}
We construct explicitly equivariant Cartan cochains $\alpha, F$ such that $d_G(\alpha+F)=C^1-C^0$.

Let $\{\psi_s\}_{s\in[0,1]}$ a isotopy from $\psi^0=Id_M$  to $\psi=\psi^1$  by  $G$-equivariant diffeomorphisms, and denote by 
$\{X_s\}_{s\in[0,1]}$ the time-dependent vector field generating $\{\psi_s\}_{s\in[0,1]}$. Define 
$\omega^s$ by $\psi_s^*(\omega^s)=\omega^0$ and 
$\mu^s\in (\Omega^{n-1}(M)\otimes \g^*)^G$ by $\psi_s^*(\mu^s)=\mu^0$. 

We claim that 
$$\alpha:=-\int_0^1\iota_{X_s}\omega^s\in \Omega^n(M) $$
satisfies $\omega^1-\omega^0=d\alpha$ (condition a) in Def. \ref{def:ecequiv}). This follows integrating from $s=0$ to $s=1$ the equation 
$$\frac{d}{ds}\omega^s=-\cL_{X_s}\omega^s=-d\iota_{X_s}\omega^s$$
where in the first equality we use \cite[Prop. 6.4]{Ana} 
\begin{equation*}
\label{eq:ana}
0=\frac{d}{ds}(\psi_s^*\omega^s)=\psi_s^*(\cL_{X_s}\omega^s+\frac{d}{ds}\omega^s).
\end{equation*}

We claim that 
$$F:=\int_0^1\iota_{X_s}\mu^s\in \Omega^{n-2}(M)\otimes \g^*$$
satisfies $\mu^1-\mu^0=\iota_{\bullet}\alpha-dF$ (condition b) in Def. \ref{def:ecequiv}).
Similarly to the above, this follows integrating from $0$ to $1$ the following expression, for all $x\in \g$:  
\begin{align*}
\frac{d}{ds}\mu^s(x)=-\cL_{X_s}(\mu^s(x))&=-\iota_{X_s}(d\mu^s(x))
-d\iota_{X_s}\mu^s(x)\\
&=\iota_{X_s}(\iota_{v_x}\omega^s)-d\iota_{X_s}\mu^s(x)\\
&=\iota_{v_x}(-\iota_{X_s}\omega^s)-d\iota_{X_s}\mu^s(x).
\end{align*}
Here in the first equality we used again \cite[Prop. 6.4]{Ana}, and in the third one   $\iota_{v_x}\omega^s=-d(\mu^s(x))$ for all $x\in \g$ (see  Remark \ref{rem:CartanModel} a)).

We are left with showing $\iota_{v_x}F(x)=0$ for all $x\in \g$ (condition c) in Def. \ref{def:ecequiv}). This holds since
$\iota_{v_x}\mu^s(x)=0$, a consequence  of   Remark \ref{rem:CartanModel} c) and the fact that $\omega^s-\mu^s$, being the pullback of a Cartan cocycle by a $G$-equivariant map,  is itself a Cartan cocycle. 

Notice that  the $X_s$ are $G$-invariant, as their flow $\{\psi_s\}$ commutes with the $G$-action. Further 
the $\omega^s$ are $G$-invariant, since $\omega$ is $G$-invariant.
Hence
$\alpha$ is $G$-invariant. By the same reasoning and the invariance of $\mu^0$, we see that  $F$ is $G$-invariant.
\end{proof}

\subsection{Equivalences of moment maps} 
 
Let $f \colon \g \rightsquigarrow L_{\infty}(M,\omega)$ be a homotopy moment map for $\omega$, and   
\begin{equation}\label{eq:muf}
\prim_k :=\varsigma(k) f_k \colon \wedge^k\g \to \Omega^{n-k}(M)\;\;\;\;\;\; \text{for}\;k=1,\dots,n.
\end{equation}
We know that
$\prim=\prim_1+\dots+\prim_n$ satisfies $\dw\prim=\widetilde{\omega}$.

Indeed, by   Prop. \ref{lem:doublemomap}, this equation characterizes moment maps for $\omega$. Therefore, if $\eta\in (\wedge^{\ge 1} \g^*\otimes \Omega(M))_{n-1}$, then $\prim+\dw \eta$ naturally provides a new moment map for $(M,\omega)$. Further, notice that if $\alpha\in (\Omega^n)^G$, by Lemma \ref{tilde} we have $\dw(\prim+\widetilde{\alpha})=\widetilde{\omega+d\alpha}$, i.e. $\prim+\widetilde{\alpha}$ provides a moment map for 
$\omega+d\alpha$. The following definition,
which arises naturally considering the complex $\cB$ introduced in \S \ref{sec:bar},
 is made so that these two kinds of moment maps are equivalent to the original one.

\begin{defi}\label{def:eqdc}
Let $\omega$ be an invariant pre-$n$-plectic form on $M$ and $f$ a moment map for $\omega$, for which we denote by $\prim$ the corresponding element of $(\wedge^{\ge 1} \g^*\otimes \Omega(M))_{n}$
as in Prop. \ref{lem:doublemomap},
 and similarly for $(\omega',f')$.
The pairs $(\omega,f)$ and $(\omega',f')$  
are \textbf{equivalent} 
  if there exist $\eta\in (\wedge^{\ge 1} \g^*\otimes \Omega(M))_{n-1}$ and $\alpha\in (\Omega^n)^G$ such that
\begin{align}\label{eq:dgalpha}
\omega'-\omega&=d\alpha\\
\label{eq:dgeta}
\prim'-\prim &=\dw \eta+\widetilde{\alpha}.
\end{align}
\end{defi}
 \begin{remark}
The equivalence introduced in Def. \ref{def:eqdc} can be phrased as a simple coboundary condition, thereby providing an algebraic justification for Def. \ref{def:eqdc}. Indeed,
in terms of the complex $\cB=(\Omega(M)^G[1]\oplus (\wedge^{\ge1} \g^*\otimes \Omega(M)), D)$ introduced in \S \ref{sec:bar}, the conditions \eqref{eq:dgalpha}-\eqref{eq:dgeta} are simply 
phrased as $$(\omega'[1],\prim')-(\omega[1],\prim)=D(\alpha[1],-\eta).$$

A geometric justification for Def. \ref{def:eqdc} is given in Prop. \ref{prop:geomjust}.

In  Appendix \ref{sec:dolg} we compare Def. \ref{def:eqdc} with the natural notion of equivalence for $L_{\infty}$-morphisms. There we show that 
two homotopy moment maps  are equivalent   \emph{with $\alpha=0$} (see Def. \ref{def:eqdc}) if{f} they are equivalent as $L_{\infty}$-morphisms. 
\end{remark}
 
\begin{remark} \label{lem:expleq}
Condition \eqref{eq:dgeta}, explicitly, is that   there exists $\eta=\eta_1+\dots+\eta_{n-1}\in (\wedge^{\ge 1} \g^*\otimes \Omega(M))_{n-1}$ and $\alpha\in (\Omega^n)^G$
  with 
\begin{align*}
-d\eta_1+\alpha_1&=(\prim'-\prim)_1,\\
d_{\g}\eta_{k-1}+(-1)^{k} d\eta_{k}+(-1)^{k-1}\alpha_k&= (\prim'-\prim)_k\;\;\;\; \forall k=2,\dots,n-1\\
d_{\g}\eta_{n-1}+(-1)^{n-1}\alpha_{n}&=(\prim'-\prim)_n,
\end{align*}
where $\alpha_i$ is defined as in  eq. \eqref{eq:omegak}.
 \end{remark}

\begin{remark}
Given a $G$-manifold, we can consider $\omega=0\in \Omega^{n+1}_{closed}(M) $ and the zero moment map $f=0$. Given $ \alpha \in \Omega^n (M)^G$, applying the operation described in  Def. \ref{def:eqdc} provides a moment map for $(M,d\alpha)$, which agrees exactly with the one provided in \cite[\S 8.1]{FRZ} for exact pre-$n$-plectic forms admitting an invariant primitive.
\end{remark}

The following proposition provides a geometric justification for Def. \ref{def:eqdc}. It states that if two moment maps are related by a $G$-equivariant diffeomorphism of $M$ isotopic to the identity, then they are equivalent.
  \begin{prop}\label{prop:geomjust}
Let the Lie group $G$ act on $M$. Let $\omega^0,\omega^1$ be   closed $(n+1)$-forms preserved by the action. Suppose that there exists a $G$-equivariant diffeomorphism $\psi$, isotopic to $Id_M$ by $G$-equivariant diffeomorphisms, such that $\psi^*\omega^1=\omega^0$.
 
 Let $f^i\colon \g\rightsquigarrow L_{\infty}(M,\omega_i)$ be homotopy moment maps $(i=0,1)$ intertwined by $\psi$ that is, for all their components ($k=1,\dots,n$) we have
\begin{equation*}
\psi^*f_k^1=f_k^0.
\end{equation*} 
Then $f^0$ and $f^1$ are   equivalent in the sense of Def. \ref{def:eqdc}.
\end{prop}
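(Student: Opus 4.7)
The plan is to mimic the proof of Proposition~\ref{prop:geoequivcoho}, working now inside the complex $\cC$ rather than the Cartan model. Let $\{\psi_s\}_{s\in[0,1]}$ be an isotopy from $\mathrm{Id}_M$ to $\psi$ through $G$-equivariant diffeomorphisms, and let $X_s$ be its time-dependent generator; it is $G$-invariant since the flow $\{\psi_s\}$ commutes with the $G$-action. Denote by $\prim^i$ the element of $\cC$ associated to $f^i$ via Proposition~\ref{lem:doublemomap}. Define interpolating data $\omega^s$ and $\prim^s$ by $\psi_s^*\omega^s=\omega^0$ and $\psi_s^*\prim^s=\prim^0$, where $\psi_s^*$ acts only on the form factor of elements of $\cC$. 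Since $\psi_s$ is $G$-equivariant, $(\psi_s^{-1})^*$ commutes with both $\dw$ and the operation $\;\widetilde{}\;$, hence $\dw\prim^s=\widetilde{\omega^s}$ for every $s$.

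For $\alpha$ I take $\alpha:=-\int_0^1\iota_{X_s}\omega^s\,ds\in\Omega^n(M)^G$, exactly as in Proposition~\ref{prop:geoequivcoho}, so that $\omega^1-\omega^0=d\alpha$ and $\alpha$ is $G$-invariant. The analogous construction in $\cC$ is $\eta:=-\int_0^1\hat\iota_{X_s}\prim^s\,ds$, where $\hat\iota_{X_s}$ is the degree $-1$ operator on $\cC$ acting on $\wedge^k\g^*\otimes\Omega^\bullet(M)$ as $(-1)^k$ times the contraction $\iota_{X_s}$ on the form factor. With this Koszul sign, a direct check gives Cartan's magic formula on the total complex:
\[
\hat{\mathcal L}_{X_s}=\dw\hat\iota_{X_s}+\hat\iota_{X_s}\dw,
\]
where $\hat{\mathcal L}_{X_s}$ denotes $\mathcal L_{X_s}$ on the form factor; it commutes with $d_\g$ (hence with $\dw$) because $X_s$ is $G$-invariant. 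Differentiating $\psi_s^*\prim^s=\prim^0$ along the isotopy (as in \cite[Prop.~6.4]{Ana}) yields $\frac{d}{ds}\prim^s=-\hat{\mathcal L}_{X_s}\prim^s$, and hence
\[
\prim^1-\prim^0=-\int_0^1\hat{\mathcal L}_{X_s}\prim^s\,ds=\dw\eta-\int_0^1\hat\iota_{X_s}\widetilde{\omega^s}\,ds,
\]
where in the second equality I used Cartan's formula together with $\dw\prim^s=\widetilde{\omega^s}$.

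The whole proof therefore reduces to the compatibility identity $\hat\iota_{X_s}\widetilde{\sigma}=\widetilde{\iota_{X_s}\sigma}$ for any $G$-invariant form $\sigma$: granted this, linearity of $\;\widetilde{}\;$ turns the last integral into $-\widetilde\alpha$, and we obtain $\prim^1-\prim^0=\dw\eta+\widetilde\alpha$, which is exactly the equivalence condition of Definition~\ref{def:eqdc}. The main obstacle is essentially bookkeeping: one must choose the Koszul sign in the definition of $\hat\iota_{X_s}$ to match the sign $(-1)^{k-1}$ appearing in the definition of $\widetilde{\phantom{\omega}}$ (Lemma~\ref{tilde}); the identity $\hat\iota_{X_s}\widetilde\sigma=\widetilde{\iota_{X_s}\sigma}$ then reduces to the graded commutation $\iota_{X_s}\iota(v_{x_1}\wedge\cdots\wedge v_{x_k})=(-1)^k\iota(v_{x_1}\wedge\cdots\wedge v_{x_k})\iota_{X_s}$, with the two $(-1)^k$ signs cancelling. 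The underlying geometric idea is thus parallel to Proposition~\ref{prop:geoequivcoho}: run a $G$-equivariant isotopy and integrate Cartan's formula along it.
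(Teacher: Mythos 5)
Your proof is correct and follows essentially the same route as the paper: run the $G$-equivariant isotopy, differentiate the transported data $\omega^s,\prim^s$, and integrate Cartan's magic formula combined with the moment map equation $\dw\prim^s=\widetilde{\omega^s}$. The only difference is presentational: you package the paper's componentwise sign bookkeeping (its explicit $\eta_k$ and its substitution of eq.~\eqref{eq:first} for $df^s_k$) into the total-complex contraction $\hat{\iota}_{X_s}$ and the identity $\hat{\iota}_{X_s}\widetilde{\sigma}=\widetilde{\iota_{X_s}\sigma}$, which reproduces exactly the same $\alpha$ and $\eta$.
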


\begin{proof} 
Let $\{\psi_s\}_{s\in[0,1]}$ a isotopy from  $\psi^0=Id_M$ to $\psi=\psi^1$ by  $G$-equivariant diffeomorphisms, and denote by 
$\{X_s\}_{s\in[0,1]}$ the time-dependent vector field generating $\{\psi_s\}_{s\in[0,1]}$. Define 
$\omega^s$ by $\psi_s^*(\omega^s)=\omega^0$ and 
$f^s $ by $\psi_s^*(f^s)=f^0$. 

The form
$$\alpha:=-\int_0^1\iota_{X_s}\omega^s \;ds\in \Omega^n(M) $$
satisfies $\omega^1-\omega^0=d\alpha$, i.e. eq. \eqref{eq:dgalpha},
as we have already shown at the beginning of the proof of Prop. \ref{prop:geoequivcoho}.
 
Now, for all $1\le k \le n$ (and defining $f^0=0$) we have  
\begin{align*}
\frac{d}{ds}f^s_k=-\cL_{X_s}f^s_k&=-d(\iota_{X_s}f^s_k)-\iota_{X_s}df^s_k\\
&= 
-d(\iota_{X_s}f^s_k)+\iota_{X_s}d_{\g}f^s_{k-1}+
\vs(k)\iota_{X_s}\omega^s_k\\
&= 
-d(\iota_{X_s}f^s_k)+d_{\g}\iota_{X_s}f^s_{k-1}+
(-1)^k\vs(k)(\iota_{X_s}\omega^s)_k,
\end{align*}
where in the first equation we used   \cite[Prop. 6.4]{Ana}, and in the second one
  $df^s_k=-d_{\g}f_{k-1}^s-\vs(k)\omega^s_k$
(which holds by eq. \eqref{eq:first}).

Multiplying by $\vs(k)$ the equation $f_k^1-f_k^0=\int_0^1 \frac{d}{ds}f^s_k\;ds$ we hence obtain
$$\prim_k^1-\prim_k^0=(-1)^k d\eta_k + d_{\g} \eta_{k-1} +(-1)^{k-1} \alpha_k,$$
where we define
$$\eta_k\colon \wedge^k\g \to \Omega^{n-1-k}(M),\;\;\eta_k(x^1,\dots,x_k)=(-1)^{k-1}
\vs(k)\int_0^1\iota(X_s)f_k^s \;ds.$$ 
As this holds for all $1\le k\le n$, we obtain $\prim_1-\prim_0=\dw \eta+\widetilde{\alpha}$, i.e. eq. \eqref{eq:dgeta}.
\end{proof}

We finish this subsection discussing equivalences for which the pre-$n$-plectic form  is fixed.
 Fix a pre-$n$-plectic form $\omega$. Restricting Def. \ref{def:eqdc} to the space of moment maps for $\omega$ we obtain: two moment maps $f,f'$ for $\omega$ are equivalent if{f}  
 there
 exist $\eta\in (\wedge^{\ge 1} \g^*\otimes \Omega(M))_{n-1}$ and a \emph{closed} form $\alpha\in (\Omega^n)^G$ such that
eq. \eqref{eq:dgeta} is satisfied\footnote{Loosely speaking, the action of $\alpha$ can be interpreted as induced by a gauge transformation on the higher Courant algebroid $TM\oplus \wedge^{n-1}T^*M$ endowed with the $\omega$-twisted Courant bracket.}.

The following proposition extends the results of \cite[\S 7.5]{FRZ}. 
\begin{prop}\label{prop:fixomega}  There exist a closed $3$-form  $\omega$ with the following property: 
There exist  an \emph{equivariant} moment map for $\omega$ which is not equivalent (in the sense of Def. \ref{def:eqdc}) to any moment map for $\omega$ arising from a Cartan cocycle of the form $\omega-\mu$ as in Prop. \ref{prop:concept}. 
\end{prop}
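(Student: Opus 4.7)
The plan is to exhibit an explicit small example where the two classes of moment maps are already distinguishable. The structural point is that a moment map of Cartan type, as in Prop.~\ref{prop:concept}, has its higher components rigidly determined by $f_{1}=\mu$ through $f_{k}(x_{1},\dots,x_{k})=\vs(k)\iota(v_{x_{1}}\wedge\cdots\wedge v_{x_{k-1}})f_{1}(x_{k})$. Each $f_{k}$ with $k\ge 2$ is built entirely by contraction against infinitesimal generators, so if the generators are sparse the higher components are forced to vanish. Producing an equivariant moment map whose top component $f_{n}$ survives the equivalence relation should therefore suffice.

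Consider $n=2$, an abelian Lie group $G$ of dimension at least two (say $G=\mathbb{T}^{2}$ with basis $e_{1},e_{2}$ of $\mathfrak{g}$) acting \emph{trivially} on a smooth manifold $M$, and any closed $3$-form $\omega$ on $M$ (for instance $\omega=0$). Because $v_{x}=0$ for all $x\in\mathfrak{g}$ and $\mathfrak{g}$ is abelian, Def.~\ref{def:momap} degenerates to: $f_{1}\colon\mathfrak{g}\to \Omega^{1}_{\mathrm{closed}}(M)$ is arbitrary, $f_{2}\colon\wedge^{2}\mathfrak{g}\to C^{\infty}(M)$ must satisfy $df_{2}=0$, and the $k=3$ equation is vacuous. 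Equivariance is automatic. Cartan-type moment maps in this setting have $f_{2}\equiv 0$ by the formula above.

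The key computation is that $f_{2}$ is unchanged under the equivalence of Def.~\ref{def:eqdc} with $\omega$ fixed. Such an equivalence is encoded by $\eta=\eta_{1}\in\mathfrak{g}^{*}\otimes C^{\infty}(M)$ and a closed invariant $\alpha\in\Omega^{2}(M)$; its contribution in $\wedge^{2}\mathfrak{g}^{*}\otimes \Omega^{0}(M)$ to $\prim'-\prim=\dw\eta+\widetilde{\alpha}$ is $d_{\mathfrak{g}}\eta_{1}-\alpha_{2}$, where $\alpha_{2}(x,y)=\iota(v_{x}\wedge v_{y})\alpha$. Both summands vanish identically here: the first because $d_{\mathfrak{g}}$ is zero on $1$-cochains of an abelian Lie algebra, the second because the infinitesimal generators are zero. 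Hence $f_{2}=\prim_{2}$ is an invariant of the equivalence class.

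To conclude, take $f_{1}:=0$ and $f_{2}:=c$ with $c(e_{1}\wedge e_{2})$ the constant function equal to $1$: this is an equivariant moment map for $\omega$ with $f_{2}\ne 0$, hence not equivalent to any Cartan-type moment map (all of which have $f_{2}=0$). No serious obstacle arises; the only mild subtlety is the recognition that $f_{2}$ plays the role of an equivalence invariant in the abelian / trivial-action regime, which is settled by the single bidegree computation above.
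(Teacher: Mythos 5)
Your proof is correct, but it takes a genuinely different route from the paper's. You make the action \emph{trivial}, so that every Cartan-type moment map collapses to $f_2=0$ (since $f_2(x_1,x_2)=\vs(2)\iota(v_{x_1})\mu(x_2)$ vanishes when all generators do), and you observe that in the relevant bidegree $\wedge^2\g^*\otimes\Omega^0(M)$ the equivalence relation of Def.~\ref{def:eqdc} contributes $d_\g\eta_1-\alpha_2$, which vanishes for an abelian $\g$ and a trivial action; hence $\prim_2=f_2$ is an invariant of the equivalence class and any moment map with constant nonzero $f_2$ does the job. All the verifications (that $(0,c)$ satisfies eqs.~\eqref{eq:mom}, \eqref{main_eq_1}, \eqref{main_eq_2}, that it is automatically equivariant, and the bidegree computation via Remark~\ref{lem:expleq}) check out, and the literal statement of the proposition is established. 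The paper instead takes the nontrivial $T^2$-action on $(T^2\times\RR,\, d\theta_1\wedge d\theta_2\wedge dz)$ and uses a subtler invariant: the function $\iota_{v_x}\prim_1(x)$, which under an equivalence changes by $-\cL_{v_x}(\eta_1(x))$; equivariance forces this to be a constant $C_x$, and a Stokes argument over the orbit circles forces $C_x=0$, so $\iota_{v_x}\prim_1(x)\ne 0$ obstructs Cartan origin (Remark~\ref{rem:CartanModel}~c)). The trade-off: your example is shorter and purely algebraic, but it hinges entirely on the degeneracy of the trivial action; the paper's example shows the phenomenon persists for a genuine multisymplectic form with a locally free action, where the obstruction already lives in the first component and is not an artifact of the Cartan construction collapsing. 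Both are valid proofs of the statement as written.
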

\begin{proof}  
Consider an action of a Lie group $G$ on a connected   pre-$2$-plectic manifold $(M,\omega)$, and let $f$ be an {equivariant} moment map.
Let $f'$ be another  {equivariant} moment map (for the same action on $(M,\omega)$) which is equivalent to $f$. This means exactly that there exist  $\eta_1\in \g^*\otimes C^{\infty}(M)$ and a \emph{closed} form $\alpha\in \Omega^2(M)^G$ satisfying  eq. \eqref{eq:dgeta}. In particular the   equation 
\begin{equation}\label{eq:impce}
-d\eta_1+\alpha_1=(\prim'-\prim)_1
\end{equation}
holds, where we denote   $\prim_1=f_1$,$\prim_2=f_2$, etc.

For every $x\in \g$, evaluating the l.h.s. of the eq. \eqref{eq:impce} on $x$ and applying  the interior product  $\iota_{v_x}$, we obtain a function. We claim that this function is a constant, which we denote by $C_x$. To see this, first notice that 
evaluating the l.h.s. of the eq. \eqref{eq:impce} on $x$ and applying    $\iota_{v_x}$ we obtain
$\iota_{v_x}(-d\eta_1(x)+\alpha_1(x))=-\cL_{v_x}(\eta_1(x))$.
Second, notice that  since $\prim_1,(\prim')_1,\alpha_1$ are equivariant, it follows that  
$d\eta_1$ is also equivariant. In particular  we have $d\cL_{v_x}\eta_1(x)=\cL_{v_x}d\eta_1(x)=d\eta_1([x,x])=0$, i.e., $\cL_{v_x}(\eta_1(x)):=-C_x$ is a  constant function.

From the above claim we conclude: if there exists $x\in \g$ such that
\begin{enumerate}
\item[i)] $\iota_{v_x}\prim_1(x)\neq 0$
\item[ii)] $C_x=0$
\end{enumerate}
  then necessarily $\iota_{v_x}(\prim')_1(x)\neq 0$, so
$f'$ can not arise from a Cartan cocycle (compare with Remark \ref{rem:CartanModel} c)). 

Now, following \cite[\S 7.5]{FRZ}, we display an example of moment map $f$ and   $x\in \g$    satisfying assumption i) and such that for \emph{every} 
 $\eta_1\in \g^*\otimes C^{\infty}(M)$ assumption ii) is satisfied. It follows that there exists no moment map which is  equivalent to $f$ and which  arises from a Cartan cocycle.

%
%
%
%
 
Let $G$ be the abelian group $S^1\times S^1$, and $(M,\omega)=(S^1\times S^1 \times \RR, d\theta_1\wedge d\theta_2 \wedge dz)$. We take the infinitesimal action of $\g$ on $M$ to be 
given by $(1,0)\in \g \mapsto \partial_{\theta_1}$, $(0,1)\mapsto \partial_{\theta_2}$.
It is easily checked that 
\begin{align*}
f_1\colon \g \to \Omega^1_{Ham}(M),&\;\;\;\;\;
(1,0)\mapsto zd\theta_2+d\theta_1,\;\;\; (0,1)\mapsto -zd\theta_1+d\theta_2,\\
f_2\colon \wedge^2 \g \to C^{\infty}(M),&\;\;\;\;\; (1,0)\wedge (0,1)\mapsto -z
\end{align*}
 are the components of an equivariant moment map.
Let  $x:=(1,0)\in \g$. Since $v_x=\partial_{\theta_1}$,
we clearly have $\iota_{v_x}f_1(x)=1\neq 0$, hence assumption i) is satisfied.
For \emph{any}
$h\in C^{\infty}(M)$  such that $\cL_{v_x}(h)=\partial_{\theta_1}(h)$ is a constant, integrating $\cL_{v_x}(h)d\theta_1$ along the circles $S^1\times \{point\}\times \{point\}$ of $M$ one sees by Stokes' theorem that the constant $\cL_{v_x}(h)$ is  necessarily  zero. Hence,
 for  \emph{any} $\eta_1\in \g^*\otimes C^{\infty}(M)$ 
 we have $\cL_{v_x}(\eta_1(x))=0$, verifying that  
  assumption ii) is satisfied. 
\end{proof}

\begin{remark}\label{rem:inner}
The notion of equivalence on the space of moment maps for $\omega$ 
mentioned just before Prop. \ref{prop:fixomega}
should not be confused with the similar but more restrictive one in which  $\alpha=0$ is imposed.
We refer to the latter notion of equivalence  as {\bf inner equivalence}. Explicitly: 
two moment maps
$f$ and $f'$   for $\omega$ are inner equivalent if there exist $\eta\in (\wedge^{\ge 1} \g^*\otimes \Omega(M))_{n-1}$ such that $\prim'-\prim =\dw \eta$, where  $\prim$ denotes the  element of $(\wedge^{\ge 1} \g^*\otimes \Omega(M))_{n}$ corresponding to $f$ 
as in Prop. \ref{lem:doublemomap},
 and similarly for $\prim'$ and $f'$. The notion of inner equivalence arises naturally if one considers the complex $(\cC,\dw)$ of \S \ref{momap} (as opposed to the  complex $\cB$ introduced in \S \ref{sec:bar}).

Notice that when $\alpha=0$,
 the first equation in Rem. \ref{lem:expleq} says that, for all $x\in \g$, the elements
$(\prim')_1(x)$ and $\prim_1(x)$ of $\Omega^{n-1}_{ham}(M,\omega)$ -- which have the same hamiltonian vector field $v_x$, and hence a priory differ by a closed form -- actually differ by an exact form.
\end{remark}

\subsection{Relation between the two notions of equivalence}
We end establishing the relation between the equivalences introduced in  
Def. \ref{def:ecequiv} and Def. \ref{def:eqdc}.

\begin{prop} Let the Lie group $G$ act on $M$. 
Take two  Cartan cocycles $C^0=\omega^0-\mu^0$ and $C^1=\omega^1-\mu^1$, with $\omega^0,\omega^1\in \Omega^{n+1}(M)^G$ and 
$\mu^0,\mu^1\in (\Omega^{n-1}(M)\otimes \g^*)^G$. Assume that $C^0$ and $C^1$ are equivalent in the sense of Def. \ref{def:ecequiv}.

Then the homotopy moment maps $f^0$ and $f^1$, 
induced by the $\mu^i$ as in Prop. \ref{prop:concept},
are equivalent in the sense of Def. \ref{def:eqdc}.
\end{prop}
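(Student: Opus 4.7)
My plan is to produce the equivalence data $(\alpha,\eta)$ required by Def.~\ref{def:eqdc} directly from the equivalence data $(\alpha,F)$ of Def.~\ref{def:ecequiv}, keeping $\alpha$ unchanged and setting $\eta:=\widetilde{F}$ with $\widetilde{\;}$ the operation of Lemma~\ref{tilde1}. The bridge between the two notions of equivalence is the identification $\prim^i=\widetilde{\mu^i}$ given by the proof of Prop.~\ref{prop:concept}, combined with the chain-map-like property $\dw\widetilde{F}=-\widetilde{dF}$ from Lemma~\ref{tilde1}. Note that the hypotheses of Lemma~\ref{tilde1} are available both for each $\mu^i$ (by Rem.~\ref{rem:CartanModel}~b) and~c), since $C^i$ is a Cartan cocycle) and for $F$ (since $F\in (\Omega^{n-2}(M)\otimes\g^*)^G$ is equivariant and condition~c) of Def.~\ref{def:ecequiv} is exactly $\iota_{v_x}F(x)=0$).

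The first equation of Def.~\ref{def:eqdc}, namely $\omega^1-\omega^0=d\alpha$, is literally condition~a) of Def.~\ref{def:ecequiv}. For the second equation I would compute, using linearity of $\widetilde{\;}$ together with condition~b),
\begin{equation*}
\prim^1-\prim^0 \;=\; \widetilde{\mu^1}-\widetilde{\mu^0} \;=\; \widetilde{\mu^1-\mu^0} \;=\; \widetilde{\iota_\bullet\alpha}-\widetilde{dF}.
\end{equation*}
By Rem.~\ref{rem:tilde1}(2), $\widetilde{\iota_\bullet\alpha}=\widetilde{\alpha_1}$ (in the sense of Lemma~\ref{tilde1}) agrees with $\widetilde{\alpha}$ (in the sense of Lemma~\ref{tilde}), and by Lemma~\ref{tilde1} applied to $F$ one has $\widetilde{dF}=-\dw\widetilde{F}$. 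Setting $\eta:=\widetilde{F}$ therefore yields
\begin{equation*}
\prim^1-\prim^0 \;=\; \widetilde{\alpha}+\dw\eta,
\end{equation*}
which is eq.~\eqref{eq:dgeta}. Degrees match, since $F\in\g^*\otimes\Omega^{n-2}(M)$ forces $\eta=\widetilde{F}\in(\wedge^{\ge 1}\g^*\otimes\Omega(M))_{n-1}$, as required.

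Conceptually, the statement is that the cone-like construction of \S\ref{sec:bar} sends Cartan-model coboundaries to moment-map coboundaries via the tilde operator; no serious obstacle arises. The only points to watch are the two overloaded uses of $\widetilde{\;}$ from Lemmas~\ref{tilde} and~\ref{tilde1}, whose coincidence on the relevant inputs is exactly Rem.~\ref{rem:tilde1}(2), and the sign in the identity $\widetilde{dF}=-\dw\widetilde{F}$, which ensures that a single $\eta:=\widetilde{F}$ (rather than $-\widetilde{F}$) does the job.
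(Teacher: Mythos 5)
Your proposal is correct and follows essentially the same route as the paper's proof: take $\eta:=\widetilde{F}$, use $\prim^i=\widetilde{\mu^i}$, apply the tilde operation of Lemma~\ref{tilde1} to condition~b), and conclude via $\widetilde{\iota_\bullet\alpha}=\widetilde{\alpha}$ (Rem.~\ref{rem:tilde1}) and $\dw\widetilde{F}=-\widetilde{dF}$. Your explicit verification that the hypotheses of Lemma~\ref{tilde1} hold for $\mu^i$ and $F$ is a welcome addition that the paper only gestures at.
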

\begin{proof}
Since we assume that $C^0$ and $C^1$ are equivalent,   there is  $\alpha \in \Omega^{n}(M)^G$ and $F\in (\Omega^{n-2}(M)\otimes \g^*)^G$ 
satisfying the  equation  appearing below Def. \ref{def:ecequiv}, that is
\begin{itemize}
\item[a)] $\omega^1-\omega^0=d\alpha$
\item[b)] $\mu^1-\mu^0=\iota_{\bullet}\alpha-dF $
\item[c)] $\iota(v_x)F(x)=0$
for all $x\in \g$,
\end{itemize} 
We now check that an equivalence between the homotopy moment maps is given by the form $\alpha$ and by $\eta:=\widetilde{F}$ (notice that $\widetilde{F}$ is well-defined by c)). The relation \eqref{eq:dgalpha} is just a).

Applying the map $\;\widetilde{}\;$ (see Lemma \ref{tilde1}) to  equation b) we obtain
$$\widetilde{\mu^1}-\widetilde{\mu^0}=\widetilde{\iota_{\bullet}\alpha}-\widetilde{dF}.$$
 Denoting by $\prim^i\in  (\wedge^{\ge 1} \g^*\otimes \Omega(M))_{n}$ the element corresponding   to  $f^i$ as in Prop. \ref{lem:doublemomap}, for $i=0,1$, 
we have $\prim^i= \widetilde{\mu^i}$ (to see this, compare the formulae in Prop. \ref{lem:doublemomap} and Prop. \ref{prop:concept}). Using $\widetilde{\iota_{\bullet}\alpha}=\widetilde{\alpha}$ (by Rem. \ref{rem:tilde1})
and $\dw\widetilde{F}=-\widetilde{d  F}$ (by Lemma \ref{tilde1}) we obtain exactly   the relation \eqref{eq:dgeta}. \end{proof}

\appendix

\section{Equivalences of moment maps and $\li$-algebra morphisms}\label{sec:dolg}
 
 
Let $G$ be a Lie group acting on a pre-$n$-plectic manifold $(M,\omega)$.
A moment map for this action (Def. \ref{def:momap}) is in particular an 
$\li$-morphism $\g\to \li(M,\omega)$. 
There is a natural notion of equivalence of $\li$-morphisms, and the aim of this appendix is to show that it 
coincides with the inner equivalence introduced in Rem. \ref{rem:inner} (that is,  equivalence in the sense of Def. \ref{def:eqdc} imposing $\alpha=0$.) 

The notion of equivalence of $\li$-morphisms
 comes from homotopy theory, and 
 coincides with the one given by equivalences of Maurer-Cartan elements 
\cite{DolgErr}. We express it following \cite[\S 5]{DolgErr}: let $\widetilde{L},L$ be $\li$-algebras. Consider $\Omega(\RR)=\RR[t]+\RR[t]dt$, the differential graded algebra of polynomial forms on the real line, where $t$ has degree 0 and $dt$ degree 1. Then $L\otimes\Omega(\RR)$ is again an $\li$-algebra \cite[\S1]{Buijs}.
\begin{defi}\label{def:eqdolg} Let $\widetilde{L},L$ be $\li$-algebras.
Let $f,f'\colon \widetilde{L}\to L$ be $\li$-morphisms.
 $f$ and $f'$ 
 are \textbf{equivalent} if{f}  there exists an $\li$-morphism $H \colon \widetilde{L}\to L\otimes\Omega(\RR)$ such that 
\begin{equation}\label{H10}
H|_{t=0,dt=0}=f,\;\; H|_{t=1,dt=0}=f'.
\end{equation}
\end{defi}

\begin{prop}
Two homotopy moment maps $f$ and $f'$ are inner equivalent (see Rem. \ref{rem:inner}) if{f} they are equivalent in the sense of Def. \ref{def:eqdolg}.
\end{prop}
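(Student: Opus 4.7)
The plan is to extend Proposition \ref{lem:doublemomap} so as to characterize $L_\infty$-morphisms $H \colon \g \to L_\infty(M,\omega) \otimes \Omega(\RR)$ in terms of the total complex $\cC \otimes \Omega(\RR) = \wedge^{\ge 1}\g^* \otimes \Omega(M) \otimes \Omega(\RR)$, equipped with the total differential $D := \dw \otimes 1 + (-1)^{\bullet}\otimes d_\RR$. Writing $H_k = f^t_k + h^t_k \wedge dt$ (with $f^t_k, h^t_k$ polynomial in $t$) and setting $\prim^t_k := \vs(k) f^t_k$, $\eta^t_k := \vs(k) h^t_k$, I expect to show: $H$ is an $L_\infty$-morphism if{f} $\Pi := \prim^t + \eta^t \wedge dt$ satisfies $D\Pi = \widetilde{\omega}$. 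This should follow by unpacking the $L_\infty$-morphism equations for $H$ with respect to the tensor-product $L_\infty$-structure on $L_\infty(M,\omega) \otimes \Omega(\RR)$ coming from the CDGA $\Omega(\RR)$, in direct analogy with the proof of Proposition \ref{lem:doublemomap}.

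Splitting $D\Pi = \widetilde{\omega}$ by $dt$-degree then yields two conditions: the $dt$-free part $\dw\prim^t = \widetilde{\omega}$ (so, by Proposition \ref{lem:doublemomap}, $f^t$ is a moment map for every $t$), and the $dt$-part $\partial_t \prim^t = (-1)^{n+1} \dw \eta^t$. The boundary conditions $H|_{t=0,dt=0}=f$ and $H|_{t=1,dt=0}=f'$ from Def.\ \ref{def:eqdolg} translate to $\prim^0 = \prim$ and $\prim^1 = \prim'$.

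With this setup in hand, both implications reduce to short arguments. For "$L_\infty$-equivalent $\Rightarrow$ inner equivalent", integrating the homotopy equation from $0$ to $1$ gives
\[
\prim' - \prim = \int_0^1 \partial_t \prim^t\, dt = (-1)^{n+1}\dw\int_0^1 \eta^t\, dt,
\]
so that $\eta := (-1)^{n+1}\int_0^1 \eta^t\,dt$ witnesses inner equivalence. Conversely, given $\eta$ with $\prim' - \prim = \dw\eta$, I would set $\prim^t := (1-t)\prim + t\prim'$ and $\eta^t := (-1)^{n+1}\eta$ (constant in $t$). Linearity of $\dw$ yields $\dw\prim^t = \widetilde{\omega}$ for all $t$, and $\partial_t\prim^t = \prim'-\prim = \dw\eta = (-1)^{n+1}\dw\eta^t$, so $\Pi = \prim^t + \eta^t \wedge dt$ is a primitive of $\widetilde{\omega}$ in the extended sense, yielding the desired $L_\infty$-morphism $H$.

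The main technical obstacle is the tensor-product extension of Proposition \ref{lem:doublemomap}: one has to unpack the full system of $L_\infty$-morphism equations for $H$ (with respect to the $L_\infty$-structure on $L_\infty(M,\omega) \otimes \Omega(\RR)$ described in \cite[\S 1]{Buijs}) and check that it decomposes into the two conditions above, while tracking the Koszul signs coming from the CDGA structure on $\Omega(\RR)$. Once this identification is established, the integration/interpolation argument above finishes the proof.
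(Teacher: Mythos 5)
Your overall strategy coincides with the paper's: characterize $L_\infty$-morphisms $H\colon\g\to L_\infty(M,\omega)\otimes\Omega(\RR)$ by separating the $dt$-free part from the $dt$-part, then integrate over $[0,1]$ in one direction and interpolate affinely ($\prim^t=\prim+t\,\dw\eta$, $\eta^t$ constant) in the other. Those two final steps are correct and are exactly what the paper does.

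The problem is the lemma everything rests on: the claimed equivalence ``$H$ is an $L_\infty$-morphism iff $D\Pi=\widetilde{\omega}$'' is false in the direction you need. The $dt$-free part of $D\Pi=\widetilde{\omega}$ is $\dw\prim^t=\widetilde{\omega}$, which by Prop.~\ref{lem:doublemomap} says that $f^t$ is a homotopy \emph{moment map} for every $t$; in particular it contains the hamiltonian condition $d(f^t_1(x))=-\iota_{v_x}\omega$. That condition is not part of being an $L_\infty$-morphism: the morphism equations involve $\omega$ only through $l_m(f^t_1(x_1),\dots,f^t_1(x_m))=\vs(m)\iota(v_{f^t_1(x_1)}\wedge\cdots\wedge v_{f^t_1(x_m)})\omega$, where the $v$'s are \emph{any} Hamiltonian vector fields of the $f^t_1(x_i)$, and nothing forces $v_{f^t_1(x)}=v_x$. (Concretely, $H\equiv 0$ is an $L_\infty$-morphism while $D(0)=0\neq\widetilde{\omega}$.) So ``$H$ is an $L_\infty$-morphism $\Rightarrow D\Pi=\widetilde{\omega}$'' fails, and this is precisely the implication you invoke to pass from Def.~\ref{def:eqdolg} to inner equivalence. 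The correct characterization (the paper's conditions \eqref{uno}--\eqref{due}) replaces ``$\dw\prim^t=\widetilde{\omega}$'' by ``$f^t$ is an $L_\infty$-morphism'' and keeps only the $dt$-part $\partial_t\prim^t=\pm\,\dw\eta^t$. Your argument can be repaired either by observing that the $dt$-part alone is all the integration step uses, or by noting that the chain-map condition on $H_1$ gives $\partial_t h^0(t)_1+d\,h^1(t)_1=0$ (eq.~\eqref{eq:nol1}), so $d(f^t_1(x))$ is $t$-independent and the boundary condition $f^0=f$ propagates the hamiltonian condition to all $t$ --- but one of these must be said explicitly. Finally, you defer the actual unpacking of the $L_\infty$-morphism equations for $L_\infty(M,\omega)\otimes\Omega(\RR)$ to a ``technical obstacle''; that computation (checking that only $l_1$ produces $dt$-terms, that $l_m$ for $m\ge 2$ kills any argument of the form $h^1(t)_1(x_i)$ by degree reasons, and the sign bookkeeping) is where most of the paper's proof lives, so as written the hardest part of the argument is still missing.
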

\begin{proof}
We first given a characterization of $L_{\infty}$-morphisms from $\g$ to $\li(M,\omega)\otimes\Omega(\RR)$.
$\li(M,\omega)\otimes\Omega(\RR)$ is concentrated in degrees $\le 1$, and its  multibrackets   $\{l_k\}$ are as follows \cite[\S1]{Buijs}: for $k\ge 2$ they
are given by the multibrackets of $\li(M,\omega)$ extended by $\Omega(\RR)$-linearity (with no signs involved),
and the differential   is $$l_1(\gamma\otimes \Gamma)=D\gamma\otimes \Gamma+(-1)^{deg(\gamma)} 
\gamma\otimes \frac{\partial}{\partial t}\Gamma dt,$$
where $D$ denotes the differential in $\li(M,\omega)$.
All multibrackets, except for the differential, vanish unless all entries are in degree $0$ or $1$.

 We observe that  the truncation
 $$T:=(\li(M,\omega)\otimes\Omega(\RR))_{< 0}\oplus \{c\in (\li(M,\omega)\otimes\Omega(\RR))_{0}: l_1(c)=0\}$$  is closed\footnote{Indeed, $T$ is closed under $l_1$ since $(l_1)^2=0$. For the higher brackets, the only non-trivial case to consider is $l_2(\gamma\otimes \Gamma,\gamma'\otimes \Gamma')$ when $\gamma,\Gamma,\gamma',\Gamma'$ all have degree zero. This bracket lies in $T$ since $l_1$ satisfies the Leibniz rule w.r.t. $l_2$.}
under the multibrackets.
 Hence $T$ is a $L_{\infty}$-algebra for which the multibrackets (except for the differential) vanish unless all entries are in degree zero.

  Let $$H\colon \wedge^{\ge 1}\g \to \li(M,\omega)\otimes\Omega(\RR)$$ be a linear   map such that $H|_{\wedge^{  k}\g}$ has degree $1-k$. \\ 
  
\noindent\textit{\underline{Claim}:  $H$ is an $L_{\infty}$-morphism if{f} 
conditions
\eqref{uno} and \eqref{due} below are satisfied.}
  
We divide the proof of the claim in three steps.

A)  
$H$ is a $\li$-morphism if{f} the image of the first component  $H_1$ is annihilated by $l_1$ and for $2 \leq m \leq n+1$,
  for all $x_{i} \in \g$
\begin{multline}\label{eq:P}  
\sum_{1 \leq i < j \leq m}
(-1)^{i+j+1}H_{m-1}([x_{i},x_{j}],x_{1},\ldots,\widehat{x_{i}},\ldots,\widehat{x_{j}},\ldots,x_{m})\\
=l_{1} H_{m}(x_{1},\ldots,x_{m}) + l_{m}(H_{1}(x_{1}),\ldots,H_{1}(x_{m})) \end{multline}
 where $H_{n+1}=0$. 
Indeed, if $H$ is a $\li$-morphism, then $H_1$ is a chain map and takes values in $l_1$-closed elements,  and therefore $H$ takes values in $T$, so that we can apply \cite[\S 3.2]{FRZ}. 
Conversely, if the image of $H_1$ is annihilated by $l_1$, we can apply 
\cite[\S 3.2]{FRZ}, and eq. \eqref{eq:P}  implies that $H$ is a $\li$-morphism.


B) Write
$$H= h^0(t)+h^1(t)dt$$ 
where $h^0(t)$ and $h^1(t)$ are maps $\wedge^{\ge 1}\g\to \li(M,\omega)\otimes \RR[t]$. Notice that the component $h^0(t)_k$ has degree $1-k$, while $h^1(t)_k$ has degree $-k$. The condition that the degree zero component of $H_1$ takes values in $l_1$-closed elements reads
\begin{equation}\label{eq:nol1}
\frac{\partial}{\partial t}h^0(t)_{1}+dh^1(t)_{1}=0.
\end{equation}
Separating the terms without $dt$ from those containing $dt$,
eq. \eqref{eq:P}   is equivalent to ($2 \leq m \leq n+1$)
\begin{multline} 
 \label{eq:multi1}
\sum_{1 \leq i < j \leq m}
(-1)^{i+j+1}h^0(t)_{m-1}([x_{i},x_{j}],x_{1},\ldots,\widehat{x_{i}},\ldots,\widehat{x_{j}},\ldots,x_{m})\\
=d h^0(t)_{m}(x_{1},\ldots,x_{m}) + l_{m}(h^0(t)_{1}(x_{1}),\ldots,h^0(t)_{1}(x_{m}))
\end{multline} 
and
\begin{multline}
\label{eq:multi2}  
\sum_{1 \leq i < j \leq m} 
(-1)^{i+j+1}h^1(t)_{m-1}([x_{i},x_{j}],x_{1},\ldots,\widehat{x_{i}},\ldots,\widehat{x_{j}},\ldots,x_{m})\\
=d h^1(t)_{m}(x_{1},\ldots,x_{m})
+(-1)^{1-m}\frac{\partial}{\partial t}h^0(t)_{m}(x_{1},\ldots,x_{m}),
\end{multline} 
where we used $m\ge 2$ and degree counting both to replace $D$ by
the de Rham differential $d$, and to conclude that  $l_m$ vanishes if one of its arguments is of the form $h^1(t)_{1}(x_{i})$.

C) Eq. \eqref{eq:nol1}, \eqref{eq:multi1} and \eqref{eq:multi2} are equivalent to the fact that the following two equations hold for all $t\in \RR$:
\begin{align}\label{uno}
h^0(t)\colon \g &\rightsquigarrow L_{\infty}(M,\omega)\text{ is an $\li$- {morphism}}\\
\label{due}
d_{tot}\overline{h^1(t)}=&\frac{\partial}{\partial t}\overline{h^0(t)}, \end{align}
where the bar denotes the following: if $\xi=\sum_{k=1}^N\xi_k$ with $\xi_k\in \wedge^{k} \g^*\otimes \Omega(M)$, then $$\bar{\xi}:=\sum_{k=1}^N\varsigma(k) \xi_k.$$
The equivalence between eq. \eqref{eq:multi1} (for all $2 \leq m \leq n+1$) and eq. \eqref{uno} is given again by \cite[\S 3.2]{FRZ}. We show the equivalence between eq.  \eqref{eq:nol1} and eq. \eqref{eq:multi2} (for all $2 \leq m \leq n+1$) on one side, and eq. \eqref{due} on the other. 
Notice that the L.H.S. of eq. \eqref{due} consists of three kinds of terms, exactly as it happens in eq. \eqref{eq:third}, \eqref{eq:first},  \eqref{eq:second}. The  analogue of eq. \eqref{eq:third} is equivalent to
 eq. \eqref{eq:nol1}. To take care of the analogues of eq.  \eqref{eq:first} and \eqref{eq:second},
write eq. \eqref{eq:multi2} in the form $-d_{\g}h^1(t)_{m-1}=dh^1(t)_{m}+(-1)^{1-m}\frac{\partial}{\partial t}h^0(t)_{m}$ for all $2 \leq m \leq n+1$, 
and use that $h^1(t)_{n}=0$ by degree reasons.
\hfill$\bigtriangleup$\\


Now, given homotopy moment maps $f,f'$, let $\prim:=\bar{f},\prim':=\bar{f'}$ (where the bar has been defined just above).
 
 Assume first that $f$ and $f'$ are inner equivalent,
  so that there is  $\eta\in (\wedge^{\ge 1} \g^*\otimes \Omega(M))_{n-1}$ with $\prim'-\prim =\dw \eta$.
 Define
 $$H= h^0(t)+h^1(t)dt:=(\overline{\prim+td_{tot}\eta})+\overline{\eta} dt.$$ 
 Notice that   $H$ satisfies eq. \eqref{H10}. Now we check the two conditions appearing in the above claim.
Condition \eqref{uno} is satisfied, because $d_{tot}\overline{h^0(t)}=d_{tot}\prim=\widetilde{\omega}$ for all $t$ and because of Prop. \ref{lem:doublemomap}. Condition \eqref{due} is   satisfied  as both sides are equal to
 $ {d_{tot}\eta}$.   Therefore by the claim $H$ is an $\li$-morphism. We conclude that $f$ and $f'$ are equivalent the sense of Def. \ref{def:eqdolg}.
 
 Conversely, assume we are given $H=h^0(t)+h^1(t)dt$ satisfying the conditions of Def. \ref{def:eqdolg}, that is: $h^0(1)=f'=\overline{\prim'}$ and
$h^0(0)=f=\overline{\prim}$, and $H$ is an $\li$-morphism. Then integrating over $t$ we define
$$\eta:=\int_0^1\overline{h^1(t)}.$$ It satisfies 
$$\prim'-\prim=\overline{h^0(1)}-\overline{h^0(0)}=\int_0^1\frac{\partial}{\partial t}\overline{h^0(t)}=d_{tot}\eta,$$ where in the last equation
condition \eqref{due} is used.
 Hence $f$ and $f'$ are inner equivalent.
\end{proof}

\bibliographystyle{habbrv} 
\bibliography{CoHo}
\end{document}